\DeclareSymbolFont{cyrletters}{OT2}{wncyr}{m}{n}
\DeclareMathSymbol{\Sha}{\mathalpha}{cyrletters}{"58}
\title[A classification of even representations onto 3-adic SL(2)]{A classification of even representations onto 3-adic SL(2)}
\date{\today}
\author[Peter Vang Uttenthal]{Peter Vang Uttenthal \orcidlink{0009-0001-0878-8213}}
\email{petervang@math.au.dk}
\address{Department of Mathematics, Aarhus Universitet, Ny Munkegade 118, 1530-421, DK-8000
Aarhus C, Denmark}
\newcommand{\Gal}{\operatorname{Gal}}
\newcommand{\Q}{\mathbb{Q}}
\newcommand{\Z}{\mathbb{Z}}
\newcommand{\F}{\mathbb{F}}
\newcommand{\Ad}{\operatorname{Ad}^0(\overline{\rho})}
\newcommand{\Adl}{\operatorname{Ad}^0(\overline{\rho}^{(\ell)})}
\newcommand{\cc}{\mathbb{C}}
\newtheorem{theorem}{Theorem}
\newtheorem{lemma}[theorem]{Lemma}
\newtheorem{definition}[theorem]{Definition}
\newtheorem{proposition}[theorem]{Proposition}
\newtheorem{corollary}[theorem]{Corollary}
\newtheorem{remark}[theorem]{Remark}
\begin{document}

\maketitle

\begin{abstract}
This paper gives a classification of even representations onto $\operatorname{SL}(2,\mathbb{Z}_3)$ of prime conductor.
In  addition, an explicit algorithm based on global class field theory is exhibited, computing an exhaustive series of such even representations.
\end{abstract}
\tableofcontents

\section{Introduction}
Let $\overline{\Q}$ be a fixed algebraic closure of the field of rational numbers $\Q$, and let $ \Gal(\overline{\Q}/\Q)$ be the absolute Galois group over $\Q$.  
If $F$ is any field, $\rho: \Gal(\overline{\Q}/\Q) \to \operatorname{GL}(2,F)$ a representation, and $\pi: \operatorname{GL}(2,F) \to \operatorname{PGL}(2,F)$ the natural projection map, the projective representation $\pi \circ \rho$ will be referred to as the projectivization of $\rho$. For an integer $n\geq 1$, let $\Gamma_1(n)$ be the congruence subgroup of $\operatorname{SL}(2,\Z)$ that reduces to the upper triangular unipotent subgroup in $\operatorname{SL}(2,\Z/n\Z)$. 
In \cite{auto}, a series of Maass wave forms $\pi^{(\ell)}$ 
on $\Gamma_1(\ell^k)$ for $k=1$ or $k=2$  
was constructed for a set of primes $\ell$
satisfying a finite list of conditions. 
Each Maass form $\pi^{(\ell)}$ on $\Gamma_1(\ell^k)$ in the series is algebraic of eigenvalue $\lambda=1/4$ and arises from an even complex Galois representation 
$$\overline{\rho}^{(\ell)}: \Gal(\overline{\Q}/\Q) \longrightarrow \operatorname{GL}(2,\mathbb{C})$$
of conductor $\ell^k$ and of tetrahedral or octahedral type via the Langlands-Tunnell theorem.  
If $\pi: \operatorname{GL}(2,\cc) \to \operatorname{PGL}(2,\cc)$ is the natural projection and if $K/\Q$ is the number field fixed by the kernel of 
$\pi\circ \overline{\rho}^{(\ell)}$, then $K$ is ramified exactly at one tame prime $\ell$, and 
$\Gal(K/\Q)$ is isomorphic to the symmetry group $A_4$ of a tetrahedron or the symmetry group $S_4$ of an octahedron. In the former case, the set of primes $\ell$ parametetrizing $\overline{\rho}^{(\ell)}$ will be denoted $\overline{\Lambda}^{(A_4)}$, and in the latter case, by $\overline{\Lambda}^{(S_4)}$.

This work focuses on the even representations of tetrahedral type. 
By definition, we will let $\overline{\Lambda}^{(A_4)}$ be the set of primes $\ell \equiv 1 \bmod 3$
such that if $\zeta_\ell$ is a primitive root of unity of order $\ell$, 
the class number $h_L$ of the cubic subfield $L$ of $\Q(\zeta_\ell)$ 
is even.
In  \cite{auto}, it was shown  that for each  $\ell \in \overline{\Lambda}^{(A_4)}$,
there is at least one complex projective representation
$\pi \circ \overline{\rho}^{(\ell)}$ of tetrahedral type ramified at $\ell$ and unramified everywhere else.
Since there is a group isomorphism $$A_4 \simeq \operatorname{PSL}(2,\F_3),$$
where $\F_3$ is the finite field with 3 elements, 
we are free to regard $\pi \circ \overline{\rho}^{(\ell)}$ as a projective representation onto $\operatorname{PSL}(2,\F_3)$.  
In \cite{auto}, it was shown that $\pi \circ \overline{\rho}^{(\ell)}$
can be lifted to a linear even representation
$\overline{\rho}^{(\ell)}$ onto $\operatorname{SL}(2,\F_3)$ 
of tetrahedral type ramified exactly at $\ell$ and that, conversely, any even representation  
$$
\overline{\rho}: \Gal(\overline{\Q}/\Q) \to \operatorname{SL}(2,\F_3)
$$
of  tetrahedral type ramified at one tame prime belongs to the series $\{ \overline{\rho}^{(\ell)}:  \ell \in \overline{\Lambda}^{(A_4)} \}$. 

In this paper, we adopt the perspective of working over $\F_3$ and study the problem: For which primes $\ell \in \overline{\Lambda}^{(A_4)}$ does $\overline{\rho}^{(\ell)}$ admit a lift $\rho$ onto $\operatorname{SL}(2,\Z_3)$? Here, $\Z_3$ is the ring of $3$-adic integers. 
Furthermore, we would like to identify a list of natural properties that allow us to classify all even representations of this kind.  
We will give an effective algorithm computing an exhaustive list of such representations, relying on global class field theory and Galois cohomology. 

In \cite{even1}, the residual representation $\overline{\rho}^{(349)}: \Gal(\overline{\Q}/\Q) \to \operatorname{SL}(2,\F_3)$ was shown to admit a lift onto $\operatorname{SL}(2,\Z_3)$. This paper is a  
generalization of the methods in \cite{even1} from one specific even representation to a whole series of representations. 

At this point, we will explain how to lift $\pi \circ \overline{\rho}^{(\ell)}$
to a linear representation onto $\operatorname{SL}(2,\F_3)$ of the same conductor. 
In the discussion, note that the center 
$Z(\operatorname{SL}(2,\F_3)) = \{\pm 1\} \simeq \F_3^\times$, 
while for a general prime $p$, 
$Z(\operatorname{SL}(2,\F_p)) = \{\pm 1 \} \subset 
Z(\operatorname{GL}(2,\F_p)) \simeq \F_p^\times$.
We proceed in two steps:
\begin{enumerate}
    \item We will show that the projective representation admits \emph{some} linear lift $\rho: \Gal(\overline{\Q}/\Q) \to \operatorname{SL}(2,\F_3)$. Here, we use that $K$ is only ramified at one prime. 
    \item Once we have some global lift $\rho$ available, we will twist it with 
    an appropriate idele class character 
    $$\chi: \mathbb{A}_\Q^\times \to Z(\operatorname{SL}(2,\F_3))$$
    to make $\chi \otimes \rho$ have the local properties we desire. 
\end{enumerate}

\emph{Step 1:} Using that $\Z/2\Z \simeq \operatorname{Z}(\operatorname{SL}(2,\F_3))$, 
the obstruction to lifting $\pi \circ \overline{\rho}^{(\ell)}$ to
$\operatorname{SL}(2,\F_3)$ defines a cohomology class  $\beta \in H^2(\Gal(\overline{\Q}/\Q), \Z/2\Z)$; hence $\beta$ can be identified with an element in the 2-torsion subgroup of the Brauer group $\operatorname{Br}(\Q)$ over $\Q$.
Recall \cite[Theorem 4.2]{milneCFT} the fundamental exact sequence of class field theory (due to Artin-Brauer-Hasse-Noether) 
\[
0 \to \operatorname{Br}(\Q) \to \bigoplus_{v} \operatorname{Br}(\Q_v) \stackrel{\Sigma}{\to} \Q /\Z \to 1
\]
where $v$ runs over all places of $\Q$ and $\Sigma(
\beta_v)_v := \sum_{v} \operatorname{inv}_v (\beta_v)$ where $\operatorname{inv}_v$ is the local invariant homomorphism of local class field theory \cite[Thm 2.1]{milneCFT}, which is an isomorphism for any finite place. 
Let $G_v :=\Gal(\overline{\Q}_v/\Q_v)$.
Since $\pi \circ \overline{\rho}^{(\ell)}$ is unramified at all places $v 
\neq \ell$, the restriction $\beta|_{G_v} = 0$ for all $v \neq \ell$. 
Combining this fact with the exactness of the fundamental sequence at the middle term,  
\[
0 = \sum_v \operatorname{inv}_v (\beta|_{G_v}) = \operatorname{inv}_\ell(\beta|_{G_\ell}),
\]
so $\beta|_{G_\ell} = 0$ as well. 
By exactness at the first term in the fundamental sequence, $\beta = 0$. Hence there exists a linear lifting 
$\rho: \Gal(\Q/\Q) \to \operatorname{SL}(2,\F_3)$
of $\pi \circ \overline{\rho}^{(\ell)}$.

\emph{Step 2:}
Once some lift $\rho$ is available, we may proceed 
as in the proof of a theorem of Tate \cite[Thm. 5, Part II]{Serre}. First, we specify, for each place $v$, a local lift $\rho'_v: G_v \to \operatorname{SL}(2,\F_3)$ such that 
\[
\rho'_v \equiv \rho|_{G_v} \bmod Z(\operatorname{SL}(2,\F_3)) = \{ \pm I\}.\]
Consider the place $v=\ell$: 
By Lemma \ref{efgatell}, $K = \Q(\pi \circ \overline{\rho}^{(\ell)})$ has inertial degree $f(\ell,K/\Q) = 1$ and ramification index $e(\ell, K/\Q) = 3$ so we may fix a basis such that  
\[
\rho(\sigma_\ell) 
\equiv \begin{pmatrix}
    1 &  \\ & 1 
\end{pmatrix}, \quad
\rho(\tau_\ell) 
\equiv \begin{pmatrix}
    1 & 1 \\ & 1 
\end{pmatrix} \mod Z(\operatorname{SL}(2,\F_3)).
\]
Hence, we define $\rho'_\ell$
by
$$
\rho'_\ell (\sigma_\ell) = \begin{pmatrix}
    1 & \\ & 1 
\end{pmatrix}, \quad \rho'_\ell(\tau_\ell) = \begin{pmatrix}
    1 & 1 \\ & 1
\end{pmatrix}.
$$
For places $v \not \in \{\ell,3\}$, let $\rho'_v$ be an unramified lift of 
$\pi \circ \overline{\rho}^{(\ell)}|_{G_v}$.
In particular, at $v=\infty$, for a generator $c$ of $G_\infty$, 
define
\[
\rho'_\infty(c) = \begin{pmatrix}
    1 & \\ & 1
\end{pmatrix}.
\]
At $v=3$, any lift $\rho_3'$ is allowed.

For all places $v$, \[
\chi_v := (\rho|_{G_v})^{-1} \rho'_v
\]
defines a character on $G_v$ valued in the center $Z(\operatorname{SL}(2,\F_3))$ with the property that  
\[
\rho'_v = \chi_v \otimes \rho|_{G_v}.
\]
By local class field theory, we can regard $\chi_v$ as a character on $\Q_v^\times$ with the property that 
 $\chi_v|_{\Z_v^\times} = 1$ for all $v\neq \ell, 3$. 
Therefore the product
$$
\chi := \otimes_v \chi_v : \mathbb{A}_\Q^\times \to Z(\operatorname{SL}(2,\F_3))
$$
defines an idele class character of $\Q$ such that $\chi|_{\Q_v^\times} = \chi_v$ for all $v$.
Equivalently, we can find a character \begin{equation} \label{chi}
    \chi: \Gal(\overline{\Q}/\Q) \to Z(\operatorname{SL}(2,\F_3))
\end{equation}
such that $\chi|_{G_v} = (\rho|_{G_v})^{-1} \rho'_v$ for all $v$.
Next, we \emph{define} $\overline{\rho}^{(\ell)}$ to be the linear representation
\[
\overline{\rho}^{(\ell)} := \chi \otimes \rho: \Gal(\overline{\Q}/\Q) \to \operatorname{SL}(2,\F_3).
\]
Then $\overline{\rho}^{(\ell)} = \chi \otimes \rho$ is a global linear lift
with the property that 
\[
\overline{\rho}^{(\ell)}|_{G_v} = \rho'_v
\]
for all $v$. 
This concludes the construction of the linear lift $\overline{\rho}^{(\ell)}$. 

To make the classification problem tractable, we impose the following natural conditions on the representations. Each  
$$
\rho: 
\Gal(\overline{\Q}/\Q) \longrightarrow \operatorname{SL}(2, \Z_3)
$$ in the series will be a unique lift of some $\overline{\rho}^{(\ell)}$ and will satisfy the properties listed below:
\begin{enumerate} 
\item  $\rho$ remains \emph{even}.
\label{even} 
    \item \label{fullimage} The image of $\rho$ is all of $\operatorname{SL}(2,\Z_3).$
    \item \label{solvable} $\rho$ is of tetrahedral type.  
    \item  \label{primeconductor} $\rho$ remains unramified away from $\ell$ and $3$.
\end{enumerate}
Firstly, we construct a series of such presentations $\rho^{(\ell)}$ for primes $\ell$ in a certain  subset  $ \Lambda^{(A_4)} \subseteq \overline{\Lambda}^{(A_4)}$ that we identify by explicit class field theoretic conditions; in particular, 
\[
\rho^{(\ell)}(\tau_\ell) = \begin{pmatrix}
    1 & 1 \\ & 1
\end{pmatrix}
\]
where $\tau_\ell$ is a generator of inertia at $\ell$.
Secondly, we will prove that the series is exhaustive, in the sense that any representation $\rho$ satisfying the properties (\ref{even}), (\ref{fullimage}), (\ref{solvable}) and (\ref{primeconductor}), and for which 
\[
\rho(\tau_\ell) \equiv 
\begin{pmatrix}
    1 & 1 \\ & 1
\end{pmatrix} \mod 9
\]
occurs in $\{ \rho^{(\ell)} : \ell \in \Lambda^{(A_4)}\}$.

Next, we define the properties precisely and comment on their significance. 
\begin{enumerate}
\item[Re (\ref{even})] A 2-dimensional representation of $\Gal(\overline{\Q}/\Q)$ is $\emph{even}$ if for any complex conjugation $c \in \Gal(\overline{\Q}/\Q),$ we have
$\det \rho (c) = 1.$ 
Even representations over $\Q$ may be thought of as those Galois representations defining \emph{totally real} number fields:
The eigenvalues of $\rho(c)$ are either $(1,1)$ or $(-1,-1)$, so it is immediate that the field 
$\Q(\pi \circ \rho)$ fixed by $\ker \pi \circ \rho$ is totally real. 
Furthermore, if $\chi$ is a character such that  
\[
(\rho \otimes \chi)(c) = \begin{pmatrix}
    1 &  \\ & 1
\end{pmatrix},
\]
then the field $\Q( \rho \otimes \chi )$ fixed by $\ker (\rho \otimes \chi)$ is totally real as well. 
\item[Re (\ref{fullimage})] If the  image of a Galois representation $\rho$ contains $\operatorname{SL}(2,\Z_3),$ we will say that $\rho$ has \emph{full image}.
    Let $\operatorname{Ad}^0(2,\F_3)$ be the space of traceless $2\times 2$ matrices over $\F_3$ on which $\operatorname{SL}(2,\F_3)$ acts by conjugation. 
    Since $H^2(\operatorname{SL}(2,\F_3), \operatorname{Ad}^0(2,\F_3)) =0$, there is a group monomorphism
    \[
\begin{tikzcd} 
\iota: \operatorname{SL}(2,\F_3) \arrow[r, hookrightarrow] & \operatorname{SL}(2, \Z_3) 
\end{tikzcd}
\]
cf. \cite[Lemma 1, p. 566]{even1}. In particular, any  residual representation  
    $$
\overline{\rho}:  \Gal(\overline{\Q}/\Q) \longrightarrow \operatorname{SL}(2,\F_3)
    $$
    gives rise to a \emph{banal} lift
    \[
    \rho = \iota \circ \overline{\rho}: \Gal(\overline{\Q}/\Q) \to \operatorname{SL}(2,\Z_3)
    \]
    for which $\ker \rho = \ker \overline{\rho}$.
    Since we would like  to avoid considering such banal representations, we will require the full image property. In addition, 
    representations with full image are automatically irreducible; in particular $\rho$ is not a ray class group character. Characters of ray class groups define abelian extensions, which are classified by class field theory.
\item[Re (\ref{solvable})] Let $\pi: \operatorname{GL}(2,\F_3) \to \operatorname{PGL}(2,\F_3)$ be the natural projection map.  
By  definition,  we will say that $\rho$ is of \emph{solvable type} if the projectivization $\pi\circ \overline{\rho}$ of the reduction $\overline{\rho}$ of $\rho$ modulo 3
has image isomorphic to one of the solvable groups $A_4$ or $S_4.$
If the image of  $\pi \circ \overline{\rho}$ is isomorphic to  $A_4,$ we say that $\rho$ (and $\overline{\rho}$) is of \emph{tetrahedral type}. 
Similarly, if the image of $\pi \circ \overline{\rho}$ is isomorphic to $S_4,$ we say that $\rho$ is of \emph{octahedral type}. 
\item[Re (\ref{primeconductor})] 
Galois representations attachted to automorphic representations, or those coming from geometry in the sense of Fontaine-Mazur, are known to be unramified away from a finite set of places  
$S$ containing $p=3$ and the archimedian place, denoted $\infty$.
%The \emph{conductor} of a representation $\rho$ is defined as a product %of the ramified primes in $S$ different from  $p=3$ raised to certain %powers. We will not need the precise definition, so we omit it here.  
To make the classification problem tractable, we impose the condition that $\rho$ be tamely ramified at exactly one prime number $\ell$. 
If this is the case, we will write $\rho^{(\ell)}$ for emphasis. 
\end{enumerate}

Below, an explicit algorithm is presented 
for choosing the subset of primes $\ell  \in \overline{\Lambda}^{(A_4)}$ for which $\overline{\rho}^{(\ell)}$ admits a lift to  $\operatorname{SL}(2,\Z_3)$ with the desired properties.
For any number field $F$ and any set of places $S$, we let $F^{(p)}_S$ denote the 
maximal $p$-elementary abelian extension of $F$ unramified away from the places in $S$. 
If $F/\Q$ is Galois, then $\Gal(F^{(p)}_S/F)$ becomes a vector space over $\F_p$ on which 
$\Gal(F/\Q)$ acts by conjugation. Also, for a number field $F$, we let $(F:\Q)$ be its degree (the dimension of $F$ regarded as a vector space over $\Q$), and we let $h_F$ denote its class number. 

 \begin{theorem} \label{algo}
The even representations onto $\operatorname{SL}(2,\Z_3)$ of prime conductors 
are indexed by a set of primes $\Lambda^{(A_4)}$
that can be computed by the following algorithm (straightforward to implement in magma or pari-gp):
Loop through all primes $\ell$, and identify those $\ell$ satisfying the following conditions:
    \begin{enumerate}
\item  $\ell \equiv 1 \bmod 3$ \label{1}
\item If $L$ is a subfield of $\Q(\zeta_\ell)$ with $(L:\Q)=3$, then 2 divides $h_L$. \label{2}
\item For $\ell$ satisfying (\ref{1}) and (\ref{2}), there is an $A_4$ extension $K/\Q$ of discriminant $\ell^2$ \cite[Proposition 13]{auto}.
Let $T=\{ 3, \ell \}$. Compute the decomposition of the ray class group 
$\Gal(K_T^{(3)}/K)$ as a module over $\F_3[A_4]$, and check that it contains at least one irreducible 3-dimensional representation of $A_4$.
\item \label{M} For any irreducible 3-dimensional representation of $A_4$ occurring in the decomposition of $\Gal(K_T^{(3)}/K)$ check that this representation is ramified at $\ell$.
\end{enumerate}
Define $\Lambda^{(A_4)}$ to be the set of primes $\ell$ satisfying all of the  four conditions listed above.
For each $\ell \in \Lambda^{(A_4)}$,
there exists a surjective even representation 
$$
\rho^{(\ell)}: \Gal(\overline{\Q}/\Q) \longrightarrow \operatorname{SL}(2, \Z_3)
$$
tamely ramified exactly at $\ell$.
\end{theorem}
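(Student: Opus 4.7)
The starting input is that algorithm conditions (\ref{1}) and (\ref{2}) define the subset $\overline{\Lambda}^{(A_4)}$, for which the construction recalled in the introduction produces a residual representation $\overline{\rho}^{(\ell)}: \Gal(\overline{\Q}/\Q) \to \operatorname{SL}(2,\F_3)$ of tetrahedral type, even, unramified outside $\{\ell,\infty\}$, with the prescribed local form $\overline{\rho}^{(\ell)}(\tau_\ell) = \begin{pmatrix} 1 & 1 \\ & 1 \end{pmatrix}$. The task is to lift this $\overline{\rho}^{(\ell)}$ through the tower $\operatorname{SL}(2,\Z/3^{n+1}\Z) \twoheadrightarrow \operatorname{SL}(2,\Z/3^n\Z)$ to a representation valued in $\operatorname{SL}(2,\Z_3)$, with ramification confined to $S := \{3,\ell,\infty\}$ and remaining tame at $\ell$. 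The natural framework is obstruction theory in Galois cohomology: letting $G_{\Q,S}$ denote the Galois group over $\Q$ of the maximal algebraic extension unramified outside $S$, first-order deformations of $\overline{\rho}^{(\ell)}$ are classified by $H^1(G_{\Q,S}, \Adl)$, and the obstruction to lifting from modulo $3^n$ to modulo $3^{n+1}$ lies in $H^2(G_{\Q,S}, \Adl)$.

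The plan is to show that algorithm conditions (3) and (\ref{M}) are exactly what is needed to furnish a first-order deformation with the right local behaviour. Since $\pi \circ \overline{\rho}^{(\ell)}$ cuts out $K$ with $\Gal(K/\Q) \simeq A_4$, the module $\Adl$ is the irreducible three-dimensional $\F_3[A_4]$-representation. Inflation-restriction along the normal subgroup $G_{K,S} \triangleleft G_{\Q,S}$ yields an embedding
\begin{equation*}
H^1(G_{\Q,S}, \Adl) \hookrightarrow \operatorname{Hom}_{A_4}\!\bigl(\Gal(K_T^{(3)}/K), \Adl\bigr)
\end{equation*}
because $H^1(A_4, \Adl) = 0$ (the Sylow 3-subgroup $C_3 \subset A_4$ has no nonzero invariants on $\Adl$ once the prime-to-3 Klein-four quotient is accounted for). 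Condition (3) forces the right-hand side to be nonzero, producing a nontrivial first-order deformation, while condition (\ref{M}) guarantees this deformation is genuinely ramified at $\ell$ rather than having its ramification absorbed into $3$ alone.

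The main obstacle is the vanishing of the obstruction class $o_n \in H^2(G_{\Q,S}, \Adl)$ at each stage of the inductive lift. I would attack this via Poitou-Tate global duality, which expresses $H^2(G_{\Q,S},\Adl)$ as the cokernel of a map from $\bigoplus_{v \in S} H^2(G_v, \Adl)$ modulo the image of the dual Selmer group for the Tate twist $\Adl(1)$. After fixing compatible local lifts at each $v \in S$---the explicit tame lift at $\ell$ sketched in the introduction, a sufficiently generic lift at $3$, and the trivial lift at $\infty$ forced by evenness---the local obstructions vanish, and the global obstruction is controlled by a dual Selmer group $H^1_{\mathcal{L}^\perp}(G_{\Q,S}, \Adl(1))$. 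Showing that this dual Selmer group is trivial under the algorithm's hypotheses is, I expect, the technical heart of the argument, and would proceed by generalizing the explicit computation of \cite{even1} for $\ell = 349$ to the entire series parametrized by $\Lambda^{(A_4)}$.

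Once the obstructions vanish, an inverse limit over $n$ produces the desired $\rho^{(\ell)}: \Gal(\overline{\Q}/\Q) \to \operatorname{SL}(2,\Z_3)$. The auxiliary properties then follow formally: evenness persists because $\overline{\rho}^{(\ell)}(c) \in \{\pm I\}$ lifts to an involution in $\operatorname{SL}(2,\Z_3)$ which must itself be $\pm I$ and hence has determinant $1$; tameness at $\ell$ is engineered into the chosen local lift; and surjectivity onto $\operatorname{SL}(2,\Z_3)$ follows from the residual surjectivity via the standard closed-subgroup observation (\cite[Lemma 1]{even1}) that any closed subgroup of $\operatorname{SL}(2,\Z_3)$ surjecting onto $\operatorname{SL}(2,\F_3)$ is all of $\operatorname{SL}(2,\Z_3)$.
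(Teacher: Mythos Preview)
Your overall strategy --- deform via Galois cohomology, control obstructions by Poitou--Tate duality, take an inverse limit --- matches the paper's. However, there is a genuine error in the surjectivity step, and the logical role of condition~(\ref{M}) is not quite what you describe.

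\textbf{The surjectivity argument fails.} The claim that every closed subgroup of $\operatorname{SL}(2,\Z_3)$ surjecting onto $\operatorname{SL}(2,\F_3)$ equals $\operatorname{SL}(2,\Z_3)$ is \emph{false} for $p=3$: since $H^2(\operatorname{SL}(2,\F_3),\operatorname{Ad}^0)=0$, the reduction map $\operatorname{SL}(2,\Z_3)\to\operatorname{SL}(2,\F_3)$ splits, and $\operatorname{SL}(2,\F_3)$ embeds as a proper closed subgroup mapping onto $\operatorname{SL}(2,\F_3)$. This is precisely the ``banal lift'' the paper warns about in the introduction; the reference \cite[Lemma~1]{even1} is to this splitting, not to any surjectivity principle. (The principle you have in mind holds for $p\geq 5$, not for $p=2,3$.) The paper instead argues surjectivity by observing that the constructed mod-$9$ lift $\rho_2$ lies in $\mathcal{C}_\ell$, so $\rho_2(\tau_\ell)$ has order~$9$; hence the image in the Frattini quotient $\Gamma/\Gamma^3[\Gamma,\Gamma]\simeq\operatorname{Ad}^0$ is nonzero, and since $\operatorname{Ad}^0$ is irreducible as an $A_4$-module, the Burnside basis theorem yields full image.

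\textbf{The role of condition~(\ref{M}).} You treat condition~(\ref{M}) as ensuring that some chosen first-order deformation is ramified at~$\ell$, and leave the vanishing of the dual Selmer group as a separate ``technical heart'' to be worked out by analogy with \cite{even1}. In the paper these are the same step. Wiles' formula, applied with $\mathcal{N}_\ell=H^1_{\mathrm{unr}}(G_\ell,\operatorname{Ad}^0)$, $\mathcal{N}_3=H^1(G_3,\operatorname{Ad}^0)$, $\mathcal{N}_\infty=0$, shows $\dim H^1_{\mathcal{N}}=\dim H^1_{\mathcal{N}^\perp}$. Condition~(\ref{M}) says exactly that \emph{every} nonzero class in $H^1(G_S,\operatorname{Ad}^0)$ is ramified at~$\ell$, i.e.\ lies outside $\mathcal{N}_\ell$; hence $H^1_{\mathcal{N}}=0$, and therefore $H^1_{\mathcal{N}^\perp}=0$ as well. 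No further computation is needed.
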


\begin{theorem} \label{intro2}
Let 
$$
\rho: \Gal(\overline{\Q}/\Q) \to \operatorname{SL}(2,\Z_3)
$$
by an even representation unramified away from $3$ and another prime $\ell$ such that $\overline{\rho}$
is surjective and unramified at 3.
Let $K$ be the field fixed by $\ker \pi \circ \overline{\rho}$. Assume that if $M$ is an extension of $K$
unramified at all places not dividing $3$ and $\ell$ 
with 
$\Gal(M/K)\simeq \Ad$, then $M/K$ is ramified at $\ell$.
Let $\tau_\ell$ be a generator of inertia at $\ell$.
\begin{enumerate}
    \item If $\rho(\tau_\ell)$ has 1 as an eigenvalue, then $\rho$ must be the surjective representation $\rho^{(\ell)}$ constructed in Theorem \ref{algo}.
    \item If $\rho(\tau_\ell)$ does not have 1 as an eigenvalue, then $\rho(\tau_\ell)$ must have eigenvalues  
    $$ 
    \begin{pmatrix}
        \zeta_3 & \\ & \zeta_3^{-1}
    \end{pmatrix}
 $$ 
 and $\rho$ is no more ramified at $\ell$ than $\overline{\rho}$.
 In fact, $\rho$ must be the banal lift. 
\end{enumerate}
\end{theorem}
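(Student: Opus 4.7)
The plan is to pin down the $\operatorname{SL}(2,\Z_3)$-conjugacy class of $\rho(\tau_\ell)$ by combining the residual structure, the tame relation $\sigma_\ell \tau_\ell \sigma_\ell^{-1} = \tau_\ell^\ell$, and the constraint that $\operatorname{tr}\rho(\tau_\ell) \in \Z_3$; then, in Case~(2), to upgrade that local information into a global statement via a deformation-theoretic induction that feeds on the hypothesis that any $\Ad$-extension of $K$ unramified outside $\{3,\ell\}$ is ramified at $\ell$. Since $\overline{\rho}(\tau_\ell)$ is a nontrivial unipotent element of $\operatorname{SL}(2,\F_3)$, each eigenvalue of $\rho(\tau_\ell)$ in $\overline{\Z_3}$ reduces to $1$ in $\overline{\F_3}$, and $\det\rho(\tau_\ell)=1$ pairs them as $\lambda,\lambda^{-1}$ with $\lambda$ either equal to $1$ or else a nontrivial $3$-power root of unity.

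In Case~(1), $\lambda=1$ together with $\det=1$ forces $\rho(\tau_\ell)$ to be a nontrivial unipotent; a change of basis puts it in the form $\begin{pmatrix}1 & 1\\0 & 1\end{pmatrix}$, which is exactly the local-at-$\ell$ datum that characterises $\rho^{(\ell)}$ in Theorem~\ref{algo}, so $\rho=\rho^{(\ell)}$ follows from the uniqueness built into that construction.

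For Case~(2), suppose $\lambda$ has order $3^k$ with $k\geq 1$. Then $\lambda+\lambda^{-1}$ generates the maximal real subfield of $\Q_3(\zeta_{3^k})$, a totally ramified extension of $\Q_3$ of degree $\phi(3^k)/2=3^{k-1}$. Since $\operatorname{tr}\rho(\tau_\ell)=\lambda+\lambda^{-1}$ lies in $\Z_3$, this forces $k=1$, and the eigenvalues are $\zeta_3,\zeta_3^{-1}$. Then $\rho(\tau_\ell)$ has characteristic polynomial $T^2+T+1$ and order exactly $3$, the same as $\overline{\rho}(\tau_\ell)$; in particular $\rho$ introduces no new ramification at $\ell$.

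To conclude Case~(2) I induct on $n\geq 1$ that $\rho\equiv\iota\circ\overline{\rho}\pmod{3^n}$: assuming this at level $n$, the quotient $\rho\cdot(\iota\circ\overline{\rho})^{-1}\bmod 3^{n+1}$ defines a $1$-cocycle $c_n\colon G_\Q\to\Ad$ whose restriction to $G_K$ cuts out an extension $M_n/K$ with $\operatorname{Gal}(M_n/K)$ an $A_4$-stable subgroup of $\Ad$. Because $\Ad$ is irreducible as an $\F_3[A_4]$-module, $\operatorname{Gal}(M_n/K)$ is either trivial or all of $\Ad$. Inertia at $\ell$ in $M_n/K$ is generated by the image of $\tau_\ell^3$, which under $\rho$ maps to $\rho(\tau_\ell)^3=I$; hence $M_n/K$ is unramified at $\ell$, and the standing hypothesis on $\Ad$-extensions forces $\operatorname{Gal}(M_n/K)$ to be trivial, i.e.\ $\rho\equiv\iota\circ\overline{\rho}\pmod{3^{n+1}}$. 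Passing to the inverse limit gives $\rho=\iota\circ\overline{\rho}$, the banal lift. The hard part will be Case~(2)'s eigenvalue dichotomy: once the ``trace-in-$\Z_3$'' argument rules out $3$-power orders beyond $3$, the induction to banality is clean, provided the irreducibility of $\Ad$ as an $\F_3[A_4]$-module is in hand.
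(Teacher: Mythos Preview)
Your overall strategy tracks the paper's, and your trace-in-$\Z_3$ argument for bounding the order of $\lambda$ is a clean variant of the paper's ``unique quadratic subfield of $\Q_3(\zeta_{3^k})$'' computation. However, there is a genuine gap at the start of Case~(2): you assert that the eigenvalues $\lambda,\lambda^{-1}$ of $\rho(\tau_\ell)$ are either $1$ or nontrivial $3$-power roots of unity, but this does not follow from $\lambda\equiv 1$ in $\overline{\F_3}$ alone. For instance, $\begin{pmatrix}1&1\\9&10\end{pmatrix}\in\operatorname{SL}(2,\Z_3)$ reduces to a nontrivial unipotent yet has eigenvalues in $1+3\Z_3$ of infinite order. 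You listed the tame relation $\sigma_\ell\tau_\ell\sigma_\ell^{-1}=\tau_\ell^\ell$ in your plan but never invoked it; the paper uses it precisely here. After diagonalizing $\rho(\tau_\ell)$, the element $\rho(\sigma_\ell)$ must be diagonal or anti-diagonal, and the anti-diagonal case is excluded because $\rho(G_\ell)$ is pro-$3$; hence $\rho|_{G_\ell}$ is abelian and the tame relation gives $\rho(\tau_\ell)^{\ell-1}=I$. Only then does $\lambda$ have finite order, and (since $\lambda\equiv 1\pmod{\mathfrak m}$) $3$-power order. Without this step your degree argument on $\lambda+\lambda^{-1}$ never gets off the ground.

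Two smaller points. In the induction for Case~(2), passing from $c_n|_{G_K}=0$ to $\rho\equiv\iota\circ\overline{\rho}\pmod{3^{n+1}}$ requires knowing that $H^1(A_4,\Ad)=0$ (it does: $\Ad$ is free of rank one over $\F_3[C_3]$ for a $3$-Sylow $C_3\subset A_4$), which you should state; the paper avoids this by arguing directly with the tower of fixed fields $\Q(\rho_n)$. In Case~(1) your appeal to ``uniqueness built into Theorem~\ref{algo}'' is really an appeal to $R_{(\mathcal N)_{v\in S}}\simeq\Z_3$, and to use it you must check $\rho|_{G_\ell}\in\mathcal C_\ell$, which again needs the tame relation to pin down $\rho(\sigma_\ell)$. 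The paper's Case~(1) proceeds instead by a direct trace computation: if $\rho_{n+1}\neq\rho_{n+1}^{(\ell)}$ while $\rho_n=\rho_n^{(\ell)}$, write $(I+3^nh)\rho_{n+1}=\rho_{n+1}^{(\ell)}$ for some nonzero $h$, and compare traces at $\tau_\ell$ to force the lower-left entry of $h(\tau_\ell)$ to vanish, whence $h|_{G_\ell}\in\mathcal N_\ell$ and $h\in H^1_{\mathcal N}(G_S,\Ad)=0$, a contradiction.
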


Consider an even representation 
\[
\rho: \Gal (\overline{\Q}/\Q) \to \operatorname{SL}(2,\Z_3)
\]
unramified away from $3$ and another prime $\ell$ such that $\overline{\rho}$ is irreducible and unramified at $3$.
It is a corollary of Theorem \ref{intro2} that if 
\[
\rho(\tau_\ell)^3 \not \equiv \begin{pmatrix}
    1 &  \\ & 1
\end{pmatrix} \mod 9
\]
then $\rho$ belongs to the series $\{ \rho^{(\ell)} :  \ell \in  \Lambda^{(A_4)} \}$ of surjective $3$-adic even representations.

Together, Theorems \ref{algo} and \ref{intro2} give a solution to the problem of explicitly constructing and classifying all even representations $\rho$ 
onto $\operatorname{SL}(2,\Z_3)$
of tetrahedral type which are 
unramified away from $3$ and one tame prime $\ell$
and 
such that $\rho \bmod 9$ 
is more ramified than $\overline{\rho}$ at $\ell$.

Note that the index set $\Lambda^{(A_4)}$
defines an ordering on $\{ \rho^{(\ell)} :  \ell \in  \Lambda^{(A_4)} \}$, so we may speak of the smallest element, the second smallest, etc. 
A consequence of Theorem \ref{algo} is an explicit construction of 
the smallest known \emph{surjective} $3$-adic even representation
$$
\rho^{(\ell)}: \Gal(\overline{\Q} / \Q) \longrightarrow \operatorname{SL}(2, \Z_3)
$$
of tetrahedral type measured by conductor. 
We will say that a prime $\ell$ is a Shanks prime if $\ell =a^2+3a+9$ for some integer $a \geq -1$. Shanks primes were tabulated in \cite{shanks} and form a subset of all primes satisfying $\ell \equiv 1 \bmod 3$. 
The  smallest known Shanks prime $\ell$ occurring as the parameter of some $\rho^{(\ell)}$ is $\ell  =  163$, and the smallest known prime not of Shanks type
is the prime $\ell = 277.$ The representation $\rho^{(349)}$ constructed in
\cite{even1} is the third smallest member of $\{ \rho^{(\ell)} :\ell \in \Lambda^{(A_4)} \}$.

\begin{remark}
    We conjecture that the series $\{ \rho^{(\ell)} : \ell\in \Lambda^{(A_4)} \}$ is infinite: There are infinitely many primes $\ell$ which is the only tamely ramified prime in a totally real $\operatorname{SL}(2,\Z_3)$-extension of $\Q$.
    Furthermore, we believe the set of such primes have positive density. These conjectures are inspired by the Cohen-Lenstra heuristics on the distribution of class groups:
    \begin{itemize}
        \item 
    In the tetrahedral case, by Theorem \ref{algo}, the conjecture contains the statement that there are 
    infinitely many totally real cubic fields $L^{(\ell)}$ ramified at $\ell$ with an even class number.  
    \item 
    In the octahedral case, the infinitude of $\overline{\Lambda}^{(S_4)}$ implies, in particular, that there are infinitely many totally real quadratic fields ramified at exactly one prime $\ell$ such that the class number of $\Q(\sqrt{\ell})$ is divisible by 3. More details on the octahedral case will appear in \cite[Section 4]{auto}.
    \end{itemize}
\end{remark}

\subsection{Acknowledgments}
I am grateful to Ravi Ramakrishna for inspiring me to pursue this work, 
to Frank Calegari for detailed remarks on earlier drafts that helped improve the paper, 
and to Paul Nelson for helpful conversations.
I would like to thank the referee for their insightful suggestions that resulted in a more complete manuscript.   
This work was supported by a research grant (VIL54509) from Villum Fonden.

\section{Balancing the global setting}

Let $p$ be a prime number and let $S$ be a finite set of places in $\Q$ containing $p$ and the archimedian place $\infty$. The maximal extension of $\Q$ unramified outside $S$ is denoted $\Q_S.$ 
Consider a representation
$$
\overline{\rho}: \Gal(\Q_S/\Q) \longrightarrow \operatorname{GL}(2,\F_p).
$$
Let $\operatorname{Ad}^0(\overline{\rho})$ be the
space of traceless $2\times 2$ matrices over $\F_p$ equipped with the action of $\Gal(\Q_S/\Q)$ induced by $\overline{\rho}$ and matrix conjugation. 
Let $\mu_p$ be the $p^{\text{th}}$ roots of unity and let
$\operatorname{Ad}^0(\overline{\rho})^* = \operatorname{Hom}(\operatorname{Ad}^0(\overline{\rho}),\mu_p)$.

For $v\in S$, a subspace 
$\mathcal{N}_v$ of the local cohomology group $H^1(\Gal(\overline{\Q}_v/\Q_v),\operatorname{Ad}^0(\overline{\rho}))$ is called a local Selmer condition, while
the collection $\mathcal{N}=(\mathcal{N}_v)_{v\in S}$ is referred to as a (global) Selmer condition.  
Let 
$$
\varphi: H^1 (\Gal(\Q_S/\Q), \operatorname{Ad}^0(\overline{\rho})) \to \bigoplus_{v\in S} H^1(\Gal(\overline{\Q}_v/\Q_v), \operatorname{Ad}^0(\overline{\rho}))
$$
denote the localization map, and let $\mathcal{N}=(\mathcal{N}_v)_{v\in S}$ be a Selmer condition. 
We define the Selmer group 
$H^1_{\mathcal{N}} (\Gal(\Q_S/\Q), \operatorname{Ad}^0(\overline{\rho}))
$ attached to $\mathcal{N}$
as the group
$$
H^1_{\mathcal{N}} (\Gal(\Q_S/\Q), \operatorname{Ad}^0(\overline{\rho})) := \varphi^{-1} \left(
\bigoplus_{v\in S} \mathcal{N}_v \right).
$$
For $v\in S$, let $\mathcal{N}_v^\perp
\subseteq H^1(\Gal(\overline{\Q}_v/\Q), \operatorname{Ad}^0(\overline{\rho})^*)$ be the annihilator of $\mathcal{N}_v$ under the local pairing of Galois cohomology. For the localization map 
$$
\varphi^*: H^1 (\Gal(\Q_S/\Q), \operatorname{Ad}^0(\overline{\rho}))^*) \to \bigoplus_{v\in S} H^1(\Gal(\overline{\Q}_v/\Q_v), \operatorname{Ad}^0(\overline{\rho}))^*)
$$
the dual Selmer group 
$H^1_{\mathcal{N}^\perp} (\Gal(\Q_S/\Q), \operatorname{Ad}^0(\overline{\rho})^*)$
attached to $\mathcal{N}^\perp =(\mathcal{N}_v^\perp)_{v\in S}$ is defined as the group 
$$H^1_{\mathcal{N}^\perp} (\Gal(\Q_S/\Q), \operatorname{Ad}^0(\overline{\rho})^*) :=
(\varphi^*)^{-1}\left(\bigoplus_{v\in S}\mathcal{N}_v^\perp \right).
$$
\begin{definition} \label{balancedrankn}
For $v\in S$, suppose the local versal deformation ring $R_v$ has a smooth quotient
\[
\begin{tikzcd} 
R_v \arrow[r, twoheadrightarrow] & \Z_p [[T_1,\ldots,T_{n_v}]]
\end{tikzcd}
\]
with tangent space
$\mathcal{N}_v \subseteq H^1(\Gal(\overline{\Q}_v/\Q_v), \Ad ).$ 
We say that the global setting is balanced if
$$\dim_{\F_{p}} H_{\mathcal{N}}^1(\Gal(\Q_S/\Q), \Ad)) 
    = \dim_{\F_{p}} H_{\mathcal{N}^\perp}^1(\Gal(\Q_S/\Q),\Ad^*).$$
If $n$ denotes this common $\F_p$-dimension of the Selmer and dual Selmer group, respectively, we refer to $n$ as the rank of the global setting. 
\end{definition}

In the deformation theory of Galois representations, the Selmer group can be seen as an obstruction to lifting a residual representation $\overline{\rho}$ from $\operatorname{GL}(2,\F_p)$ to $\operatorname{GL}(2,\Z_p)$. In the Galois cohomological method (\cite{artinII}, \cite{FKP1}, \cite{SFM}), the lifting problem is approached in two steps: 
\begin{enumerate}
    \item For the set $S$ of places at which the residual representation $\overline{\rho}$ is ramified, find a Selmer condtion $\mathcal{N}=(\mathcal{N}_v)_{v\in S}$ 
    for which the global setting is balanced. 
    \item If the global setting is balanced of rank $n$ for some $n\geq 1$, allow ramification at additional primes with the effect of lowering the Selmer rank, while ensuring that the global setting remains balanced. 
\end{enumerate}
The first step in the method entails finding, for each $v\in S$, a class $\mathcal{C}_v$ of local representations at $v$ that are always liftable: 

\begin{definition} Let $v\in S.$
Let $\mathcal{C}_v$ be the set of deformations of $\overline{\rho}|_{\Gal(\overline{\Q}_v/\Q_v)}$ that factor through the smooth quotient $\Z_p[[T_1,\ldots,T_{n_v}]]$, 
where $n_v=\dim \mathcal{N}_v$.
\end{definition}

With this definition of $\mathcal{C}_v$, the space $\mathcal{N}_v$ is precisely the space of local cohomology classes that preserve $\mathcal{C}_v$. 
To build a global setting that is balanced, we need to 
arrange for two conditions to be satisfied: 
\begin{enumerate}
    \item \label{balance1} For the places $v \in S$ different from the prime $p$ and the archimedian place of $\Q$, 
    we need that 
$$
\dim \mathcal{N}_v = \dim H^0(\Gal(\overline{\Q}_v/\Q_v), \Ad).
$$
In the affirmative, we say that the pair $(\mathcal{N}_v, \mathcal{C}_v)$ (or just the Selmer condition $\mathcal{N}_v$) is locally balanced at $v$.
\item \label{balance2} The Selmer condtion $\mathcal{N}_p$ and the Selmer condition $\mathcal{N}_\infty$ at infinity need to satisfy the condition 
$$
\dim \mathcal{N}_p - \dim H^0(\Gal(\overline{\Q}_p/\Q_p),\Ad) + \dim\mathcal{N}_\infty-H^0(\Gal(\overline{\Q}_\infty/\Q_\infty),\Ad) =0.
$$
If this condition is satisfied, we say that $\mathcal{N}_p$ and $\mathcal{N}_\infty$ balance each other out.
\end{enumerate}
Below, we will achieve both of these balancing conditions (\ref{balance1}) and (\ref{balance2}) in the concrete global setting of this paper. With these conditions in place, the final step is to invoke Wiles' formula \cite[Proposition 1.6]{Wiles}, which we recall in Theorem \ref{Wiles formula} below, to prove that the global setting is balanced. 

We will need one final definition which will be useful in the sequel.
\begin{definition}
Let $R_{(\mathcal{N})_{v \in S}}$  be the global deformation ring parametrizing deformations $\rho$ of $\overline{\rho}$ such that 
$\rho|_{\Gal(\overline{\Q}_v/\Q_v)} \in \mathcal{C}_v$ at all $v\in S.$
\end{definition}
The key feature guaranteeing the existence of the ring $R_{(\mathcal{N})_{v \in S}}$ is that the conditions defining $\mathcal{N}_v$ for $v\in S$ are deformation conditions in the sense of \cite[Section 23 and 25]{Mazur2}. For more details, see the proof of Lemma 2 in \cite{KR}. 

Note that $R_{(\mathcal{N})_{v \in S}}$ is a global deformation ring whose tangent space is precisely the Selmer group $H^1_\mathcal{N}(\Gal(\Q_S/\Q), \Ad)$. In particular, 
if the global setting is balanced of rank zero according to Definition \ref{balancedrankn}, 
then either 
$R_{(\mathcal{N})_{v \in S}} \simeq \Z_p$ or $R_{(\mathcal{N})_{v \in S}}\simeq \Z/p^n\Z$ for some $n\in \Z_{\geq 1}$.

\subsection{The Selmer condition at the tame prime} \label{selmeratell}
For any place $v$ in $\Q$ let 
$G_v = \Gal(\overline{\Q}_v/\Q_v).$ 
For a finite set of places $S$ in $\Q$, recall that $\Q_S$ is the maximal extension of $\Q$ unramified outside $S$ and define
$G_S = \Gal(\Q_S / \Q).$
Let 
\[
\overline{\rho}: \Gal(\overline{\Q}/\Q) \to \operatorname{SL}(2,\F_3)
\]
be an even representation of tetrahedral type. If 
$K = \Q(\pi \circ \overline{\rho})$ is the number field fixed by the kernel of the projective representation $\pi \circ \overline{\rho}$ then $\Gal(K/\Q)\simeq A_4$.
Since $\overline{\rho}$ is even, the field $K$ is totally real.
Suppose, furthermore, that $K$ is ramified at one prime 
$\ell \equiv 1 \bmod 3$ and unramified at all other places.  
Let $V_4$ be the unique normal subgroup of $A_4$ isomorphic to Klein's four-group, and let $L$ be the fixed field of $V_4$. 
Since there are no unramified extensions of $\Q$, the extension $L/\Q$ must be totally ramified at $\ell$. In particular, the
ramification index $e(\ell,L/\Q) =3$ and the inertial degree $f(\ell,L/\Q) =1$.
Since $\ell \equiv 1\bmod 3$, we note that $L$ is the unique totally real cubic subfield of 
$\Q(\zeta_\ell)$, where $\zeta_\ell$ is a primitive root of unity of order $\ell$. 

\begin{lemma} \label{efgatell} 
The ramification index $e(\ell,K/\Q)=3$ and the inertial degree $f(\ell,K/\Q) = 1$. Hence, $K/L$ is unramified and the class number $h_L$ of $L$ is even. 
\end{lemma}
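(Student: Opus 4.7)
The plan is to combine the multiplicativity of $e$ and $f$ in the tower $K/L/\Q$ with basic information about cyclic subgroups of $A_4$ to pin down the local behavior at $\ell$, and then to recognize $K/L$ as an unramified abelian extension realizing a piece of the class group of $L$.

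First, since we have already established $e(\ell, L/\Q) = 3$ and $f(\ell, L/\Q) = 1$, multiplicativity in the tower gives
\[
e(\ell, K/\Q) = 3 \cdot e(\ell, K/L), \qquad f(\ell, K/\Q) = f(\ell, K/L),
\]
so in particular $e(\ell, K/\Q) \geq 3$. Because $\ell \equiv 1 \bmod 3$ forces $\ell \geq 7 > 3$ and $|A_4| = 12$, the prime $\ell$ does not divide $|\Gal(K/\Q)|$, so the ramification at $\ell$ is tame and the inertia group $I_\ell \subseteq A_4$ is cyclic. The cyclic subgroups of $A_4$ have orders $1$, $2$, or $3$, so the lower bound $e(\ell, K/\Q) \geq 3$ forces $e(\ell, K/\Q) = 3$ and $I_\ell$ to be a Sylow $3$-subgroup of $A_4$.

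Next I would identify the decomposition group $D_\ell$. It contains $I_\ell$ as a normal subgroup of order $3$ with $D_\ell/I_\ell$ cyclic (generated by Frobenius). The subgroups of $A_4$ containing a given subgroup of order $3$ are only that subgroup itself and all of $A_4$, since no subgroup of order $6$ exists. The option $D_\ell = A_4$ is excluded because then $D_\ell/I_\ell \simeq V_4$ is not cyclic, leaving $D_\ell = I_\ell$ and hence $f(\ell, K/\Q) = 1$, as claimed.

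With the local indices settled, multiplicativity gives $e(\ell, K/L) = 1$, so $K/L$ is unramified at primes above $\ell$. Since $K/\Q$ is ramified nowhere outside $\ell$, the extension $K/L$ is unramified at every finite place; it is also unramified at infinity because $K$ is totally real. Thus $K/L$ is an everywhere-unramified abelian extension with $\Gal(K/L) \simeq V_4$, so it is contained in the Hilbert class field of $L$. Consequently $\operatorname{Cl}(L)$ surjects onto $V_4$, which forces $4 \mid h_L$, and in particular $h_L$ is even. The main (and only) delicate point is the ruling out of $D_\ell = A_4$ via the cyclicity of $D_\ell/I_\ell$, which is immediate once one lists the subgroups of $A_4$; the rest is bookkeeping with $efg = 12$.
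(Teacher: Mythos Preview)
Your argument is correct and reaches the conclusion, but it differs from the paper's route and has one small slip worth fixing.

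\textbf{The slip.} When you rule out $D_\ell = A_4$ by writing ``$D_\ell/I_\ell \simeq V_4$ is not cyclic'', note that a Sylow $3$-subgroup $I_\ell$ is \emph{not} normal in $A_4$, so the quotient $A_4/I_\ell$ is not a group at all. The exclusion is in fact immediate from normality alone: inertia is always normal in decomposition, so $D_\ell \subseteq N_{A_4}(I_\ell)$; since $A_4$ has four Sylow $3$-subgroups, each is self-normalizing, hence $D_\ell = I_\ell$ and $f(\ell,K/\Q)=1$. Your listing of subgroups containing $I_\ell$ is fine, but the disqualification of $A_4$ should be phrased this way.

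\textbf{Comparison with the paper.} The paper obtains $f(\ell,K/\Q)=1$ by a genuinely different, arithmetic argument: it first deduces $e(\ell,K/\Q)=3$ from tame cyclicity (as you do), hence $K/L$ is unramified and $K$ sits inside the Hilbert class field of $L$. It then shows, via the norm of $1-\zeta_\ell$ from $\Q(\zeta_\ell)$ down to $L$, that the unique prime of $L$ above $\ell$ is \emph{principal}, so it splits completely in the Hilbert class field and a fortiori in $K$. Your approach is purely group-theoretic in $A_4$ and avoids any appeal to the cyclotomic realization of $L$; it is more elementary and would work verbatim for any $A_4$-extension ramified at a single tame prime. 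The paper's approach, on the other hand, yields the extra piece of information that the prime above $\ell$ in $L$ is principal, which is of independent interest even if not needed for the lemma as stated.
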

\begin{proof}
Since tame ramification is cyclic, we must have $e(\ell,K/\Q) = 3$.
Next, we compute the inertial degree. We start with the prime factorization of $\ell$ as an ideal in $\Q(\zeta_\ell)$:
$$
\ell \Z[\zeta_\ell] = (1-\zeta_\ell)^{\ell-1}. 
$$
Recall that $\ell \equiv 1 \bmod 3$ and apply the norm $N_{\Q(\zeta_\ell)/L}$ on both sides to obtain the equality of ideals in the ring of integers $\mathcal{O}_L$ of $L$
$$
\ell^{(\ell-1)/3} \mathcal{O}_L=(N_{\Q(\zeta_\ell)/L}(1-\zeta_\ell))^{(\ell-1)}.
$$
Since $N_{\Q(\zeta_\ell)/L}(1-\zeta_\ell)$ is a prime element in $L$, we conclude that 
$$
\ell \mathcal{O}_L = (N(1-\zeta_\ell))^3.
$$
In particular, the prime ideal above $\ell$ in $L$ is principal; therefore it splits completely in the Hilbert class field of $L$.
Since $f(\ell,L/\Q) = 1$ and $K$ is contained in the Hilbert class field of $L$, we conclude that $f(\ell,K/\Q) = 1$.
\end{proof}
By Lemma \ref{efgatell}, 
$$
\overline{\rho}(\sigma_\ell) \equiv 
\begin{pmatrix}
    1 &  \\ & 1
\end{pmatrix},
\quad 
\overline{\rho}(\tau_\ell) \equiv 
\begin{pmatrix}
1 & 1 \\
 & 1
\end{pmatrix} \mod Z(\operatorname{SL}(2,\F_3)).
$$

Let us choose a character 
$\chi: G_\Q \to Z(\operatorname{SL}(2,\F_3))$
as in equation (\ref{chi}) of the introduction;
then  
\[
(\overline{\rho} \otimes \chi)(\sigma_\ell) = \begin{pmatrix}
    1 & \\ & 1
\end{pmatrix}, \quad 
(\overline{\rho} \otimes \chi )(\tau_\ell) = 
\begin{pmatrix}
    1 & 1 \\
    & 1 
\end{pmatrix}.
\]
Note that the adjoint representation $\operatorname{Ad}^0(\overline{\rho} \otimes \chi)$ remains unchanged compared to $\operatorname{Ad}^0(\overline{\rho})$ because 
the center $Z(\operatorname{SL}(2,\F_3))$ 
acts trivially. 
Define $\overline{\rho}^{(\ell)}$ as the twisted representation
\[
\overline{\rho}^{(\ell)} = \overline{\rho} \otimes \chi.
\]

Let $\mathcal{C}_\ell$ be the set of deformations of $\overline{\rho}^{(\ell)}|_{G_\ell}$ with coefficients in any complete Noetherian local $\Z_3$-algebra $A$
of the form
$$
 \sigma_\ell \mapsto 
\begin{pmatrix}
\sqrt{\ell} & y \\
0 & \sqrt{\ell}^{-1}
\end{pmatrix},
\quad 
 \tau_\ell \mapsto 
\begin{pmatrix}
1 & 1 \\
0 & 1
\end{pmatrix}
$$
for some $y$ in the maximal ideal $m_A$ of $A$. Note that $\ell \equiv 1 \bmod 3$ implies $\sqrt{\ell} \in 1+3\Z_3$.
\begin{lemma} 
The local cohomology groups at $\ell$ have
$$
\dim H^0(G_\ell, \operatorname{Ad}^0(\overline{\rho}^{(\ell)})) = 1, \quad 
\dim H^2(G_\ell, \operatorname{Ad}^0(\overline{\rho}^{(\ell)})) = 1,
$$ 
while 
$$
\dim H^1(G_\ell, \operatorname{Ad}^0(\overline{\rho}^{(\ell)} )) =2,  \quad 
\dim H^1_{\operatorname{unr}}(G_\ell, \operatorname{Ad}^0(\overline{\rho}^{(\ell)})) =1.$$
Furthermore,  
$H^1_{\operatorname{unr}}(G_\ell, \operatorname{Ad}^0(\overline{\rho}^{(\ell)}))$ is spanned by 
$$
g^{\operatorname{unr}}: 
\sigma_\ell \mapsto 
\begin{pmatrix}
0 & 1 \\
0 & 0
\end{pmatrix},
\quad 
 \tau_\ell \mapsto 
\begin{pmatrix}
0 & 0 \\
0 & 0
\end{pmatrix}.
$$
\end{lemma}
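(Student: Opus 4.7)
The plan is to compute the four quantities in order $H^0$, $H^2$, $H^1$, $H^1_{\operatorname{unr}}$, using direct invariant calculation for the first, local Tate duality for the second, the local Euler--Poincar\'e characteristic formula for the third, and then identify $H^1_{\operatorname{unr}}$ explicitly. Throughout, I will use that $\overline{\rho}^{(\ell)}(\sigma_\ell)=I$ acts trivially on $\operatorname{Ad}^0(\overline{\rho}^{(\ell)})$, while $\overline{\rho}^{(\ell)}(\tau_\ell)=\bigl(\begin{smallmatrix}1&1\\0&1\end{smallmatrix}\bigr)$ acts by conjugation. Also crucial: because $\ell\equiv 1\bmod 3$, the group $G_\ell$ acts trivially on $\mu_3$, so $\operatorname{Ad}^0(\overline{\rho}^{(\ell)})^*\simeq \operatorname{Ad}^0(\overline{\rho}^{(\ell)})$ as $G_\ell$-modules, and furthermore $\ell\neq 3$ means the coefficient module has order prime to the residue characteristic.

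For $H^0$, I would work in the basis $e_+=\bigl(\begin{smallmatrix}0&1\\0&0\end{smallmatrix}\bigr)$, $h=\bigl(\begin{smallmatrix}1&0\\0&-1\end{smallmatrix}\bigr)$, $e_-=\bigl(\begin{smallmatrix}0&0\\1&0\end{smallmatrix}\bigr)$ of $\operatorname{Ad}^0$ and compute the matrix of $\tau_\ell-\mathrm{id}$. A short calculation gives an upper-triangular nilpotent of rank $2$ over $\F_3$, so the kernel is one-dimensional, spanned by $e_+$; hence $\dim H^0=1$.

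For $H^2$, I would invoke local Tate duality. Since $\mu_3\subset \Q_\ell$ the self-duality observed above gives
$$\dim H^2(G_\ell,\operatorname{Ad}^0(\overline{\rho}^{(\ell)}))=\dim H^0(G_\ell,\operatorname{Ad}^0(\overline{\rho}^{(\ell)})^*)=\dim H^0(G_\ell,\operatorname{Ad}^0(\overline{\rho}^{(\ell)}))=1.$$
For $H^1$, I apply the local Euler--Poincar\'e formula at the place $\ell\neq 3$: since $|\operatorname{Ad}^0(\overline{\rho}^{(\ell)})|$ is a power of $3$ (coprime to $\ell$), the Euler characteristic vanishes, so
$$\dim H^1=\dim H^0+\dim H^2=2.$$

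Finally, for the unramified subspace, I use that for $\ell\neq 3$ and finite coefficients of order prime to $\ell$, $H^1_{\operatorname{unr}}(G_\ell,M)$ is inflated from $H^1(G_\ell/I_\ell,M^{I_\ell})\simeq M^{I_\ell}$, so its dimension equals $\dim H^0=1$. To produce an explicit generator, I look for a cocycle vanishing on $I_\ell$; then its value on $\sigma_\ell$ must lie in the inertia invariants, i.e.\ in $\F_3\cdot e_+$, which is exactly the cocycle $g^{\operatorname{unr}}$ in the statement. The only thing to check is the cocycle condition against the tame relation $\sigma_\ell\tau_\ell\sigma_\ell^{-1}=\tau_\ell^\ell$: both sides reduce to $0$ because $e_+$ is fixed by $\tau_\ell$ (so $\tau_\ell^\ell\cdot e_+=e_+$) and $g^{\operatorname{unr}}(\tau_\ell)=0$. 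The main potential pitfall is this compatibility check and the bookkeeping in Tate duality; once those are handled the dimension count is immediate from the standard toolkit.
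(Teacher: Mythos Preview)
Your argument is correct and follows exactly the strategy the paper indicates: the paper's own proof is the single sentence ``This follows from local duality and the local Euler characteristic,'' and you have simply supplied the details---computing $H^0$ by hand, using the self-duality $\operatorname{Ad}^0\simeq\operatorname{Ad}^0{}^*$ coming from $\ell\equiv 1\bmod 3$ for $H^2$, the Euler--Poincar\'e formula for $H^1$, and the standard identification $H^1_{\operatorname{unr}}=H^1(G_\ell/I_\ell,M^{I_\ell})$ together with the explicit generator. Your verification of the cocycle relation via $\sigma_\ell\tau_\ell=\tau_\ell^{\ell}\sigma_\ell$ is fine; the reason $\tau_\ell^{\ell}\cdot e_+=e_+$ enters is precisely that it makes $g^{\operatorname{unr}}(\tau_\ell^{\ell}\sigma_\ell)=\tau_\ell^{\ell}\cdot e_+=e_+=g^{\operatorname{unr}}(\sigma_\ell\tau_\ell)$.
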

\begin{proof}
    This follows from local duality and the local Euler characteristic.
\end{proof}

\begin{lemma} \label{localatl} Let $R_\ell$ be the local versal deformation ring at $\ell$. 
There is a smooth quotient 
\[ 
\begin{tikzcd}
R_\ell \arrow[r, twoheadrightarrow] &  \Z_3[[T]]    
\end{tikzcd}
\]
with tangent space 
$\mathcal{N}_\ell =  
H^1_{\operatorname{unr}}(G_\ell, \operatorname{Ad}^0(\overline{\rho}^{(\ell)})).$ 
The class $\mathcal{C}_\ell$ consists precisely of those deformations 
 of $\overline{\rho}|_{G_\ell}$ that factor through this smooth quotient, and 
$$
\dim \mathcal{N}_\ell = \dim H^0(G_\ell, \operatorname{Ad}^0(\overline{\rho}^{(\ell)})).
$$
\end{lemma}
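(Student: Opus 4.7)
The plan is to exhibit an explicit formally smooth one-parameter family of local deformations over $\Z_3[[T]]$ whose specializations are precisely the members of $\mathcal{C}_\ell$, deduce by versality that this yields a smooth quotient of $R_\ell$, and then compute the tangent space by inspection at first order.

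To carry this out I would define $\rho_T : G_\ell \to \operatorname{SL}(2, \Z_3[[T]])$ on the tame generators by
\[
\sigma_\ell \mapsto \begin{pmatrix} \sqrt{\ell} & T \\ 0 & \sqrt{\ell}^{-1} \end{pmatrix}, \qquad \tau_\ell \mapsto \begin{pmatrix} 1 & 1 \\ 0 & 1 \end{pmatrix},
\]
where $\sqrt{\ell} \in 1 + 3\Z_3$ exists by Hensel's lemma because $\ell \equiv 1 \pmod 3$. The only non-trivial relation in the tame quotient of $G_\ell$ is $\sigma_\ell \tau_\ell \sigma_\ell^{-1} = \tau_\ell^\ell$, and a direct matrix computation shows both sides equal $\begin{pmatrix} 1 & \ell \\ 0 & 1 \end{pmatrix}$ for every value of $T$; reducing modulo $3$ recovers $\overline{\rho}^{(\ell)}|_{G_\ell}$ since $\sqrt{\ell} \equiv 1 \pmod 3$, so $\rho_T$ is a genuine deformation. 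Conversely, any $\rho \in \mathcal{C}_\ell(A)$ is the unique pullback of $\rho_T$ along the $\Z_3$-algebra map $\Z_3[[T]] \to A$ sending $T$ to the parameter $y \in m_A$, and formal smoothness is immediate because any $y' \in m_{A'}$ lifts along a small surjection $A \twoheadrightarrow A'$. By versality, this corresponds to a surjection $R_\ell \twoheadrightarrow \Z_3[[T]]$ classifying the subfunctor $\mathcal{C}_\ell$.

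Finally, specializing to $A = \F_3[\epsilon]/\epsilon^2$ with $T \mapsto \epsilon t$ produces the first-order cocycle $\delta(\sigma_\ell) = \begin{pmatrix} 0 & t \\ 0 & 0 \end{pmatrix}$ and $\delta(\tau_\ell) = 0$, i.e., $\delta = t \cdot g^{\operatorname{unr}}$ in the notation of the preceding lemma. Thus the tangent space of the smooth quotient is spanned by $g^{\operatorname{unr}}$, which is exactly $H^1_{\operatorname{unr}}(G_\ell, \operatorname{Ad}^0(\overline{\rho}^{(\ell)}))$; the equality $\dim \mathcal{N}_\ell = 1 = \dim H^0$ is then read off from the preceding lemma. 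The main subtlety to be careful about is that distinct values of $y$ produce genuinely inequivalent first-order deformations, so that the parameter $T$ contributes one full dimension of tangent space rather than being absorbed by strict equivalence; this is guaranteed by the non-triviality of the cohomology class $g^{\operatorname{unr}}$ in $H^1$.
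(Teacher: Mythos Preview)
The paper states this lemma without proof, relying implicitly on the explicit description of $\mathcal{C}_\ell$ given immediately before it and on the dimension counts of the preceding lemma. Your argument supplies precisely the details one would expect the author to have had in mind: the one-parameter family $\rho_T$ you write down is the universal object for $\mathcal{C}_\ell$ by inspection, the tame relation check is correct, and the first-order reduction to $g^{\operatorname{unr}}$ matches the preceding lemma verbatim. So your proposal is correct and is essentially the omitted proof.

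One small remark on presentation: when you say ``by versality this corresponds to a surjection $R_\ell \twoheadrightarrow \Z_3[[T]]$'', the surjectivity deserves a word. Versality gives you \emph{some} map $R_\ell \to \Z_3[[T]]$ inducing $\rho_T$; it is surjective because the induced map on reduced tangent spaces is the inclusion of the one-dimensional span of $g^{\operatorname{unr}}$ into $H^1(G_\ell,\operatorname{Ad}^0)$, and you have already checked this class is nonzero. You do address this in your final sentence, but it would read more cleanly placed at the point where surjectivity is first asserted.
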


\subsection{Balancing $\mathcal{N}_3$ and $\mathcal{N}_\infty$}
Recall that $\overline{\rho}^{(\ell)} 
= \overline{\rho}\otimes \chi$ for $\chi$ defined as in (\ref{chi}) and note that
$\operatorname{Ad}^0(\overline{\rho}^{(\ell)})
\simeq \operatorname{Ad}^0(\overline{\rho})$ as Galois modules.

\begin{lemma} \label{ranksat3} 
The second local cohomology group $
H^2(G_3, \operatorname{Ad}^0(\overline{\rho}))$ is trivial.
\end{lemma}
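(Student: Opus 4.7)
The plan is to pass to the Tate dual and show that the inertia at $3$ has no nonzero invariants on $\operatorname{Ad}^0(\overline{\rho})^* := \operatorname{Hom}(\operatorname{Ad}^0(\overline{\rho}), \mu_3)$. By local Tate duality for $G_3$ acting on the finite $3$-torsion module $\operatorname{Ad}^0(\overline{\rho})$, we have a perfect pairing
\[
H^2(G_3, \operatorname{Ad}^0(\overline{\rho})) \;\cong\; H^0(G_3, \operatorname{Ad}^0(\overline{\rho})^*)^\vee,
\]
so it suffices to prove $H^0(G_3, \operatorname{Ad}^0(\overline{\rho})^*) = 0$, and in fact it is enough to prove the vanishing after restriction to the inertia subgroup $I_3 \subset G_3$.

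The key observation is that the $G_\Q$-action on $\operatorname{Ad}^0(\overline{\rho})$ is by conjugation, so the center $Z(\operatorname{SL}(2,\F_3)) = \{\pm I\}$ acts trivially and the action factors through $\pi\circ\overline{\rho}$. By assumption, the field $K = \Q(\pi\circ\overline{\rho})$ is unramified at $3$, hence $I_3$ acts trivially on $\operatorname{Ad}^0(\overline{\rho})$. Consequently, the $I_3$-action on the Tate twist $\operatorname{Ad}^0(\overline{\rho})^* = \operatorname{Hom}(\operatorname{Ad}^0(\overline{\rho}), \mu_3)$ is simply scalar multiplication by the mod-$3$ cyclotomic character $\chi_{\mathrm{cyc}}|_{I_3}: I_3 \to \F_3^\times$.

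Now the extension $\Q_3(\zeta_3)/\Q_3$ is a totally ramified quadratic extension (since $3\nmid 3-1$, the field $\Q_3$ does not contain $\mu_3$, and $\zeta_3 - 1$ is a uniformizer of $\Q_3(\zeta_3)$). Hence $\chi_{\mathrm{cyc}}|_{I_3}$ is surjective onto $\F_3^\times = \{\pm 1\}$, so there exists $\sigma \in I_3$ acting on $\operatorname{Ad}^0(\overline{\rho})^*$ by multiplication by $-1$. Any element $f \in (\operatorname{Ad}^0(\overline{\rho})^*)^{I_3}$ must then satisfy $f = -f$, i.e.\ $2f = 0$; but $2$ is invertible in $\F_3$, forcing $f = 0$. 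Therefore $H^0(I_3, \operatorname{Ad}^0(\overline{\rho})^*) = 0$, a fortiori $H^0(G_3, \operatorname{Ad}^0(\overline{\rho})^*) = 0$, and by the duality above $H^2(G_3, \operatorname{Ad}^0(\overline{\rho})) = 0$.

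The whole argument is essentially a one-line computation once the setup is in place; the only subtle point to get right is that unramifiedness of $K/\Q$ at $3$ (rather than of $\overline{\rho}$ itself) is what matters, because the adjoint representation factors through the projectivization. There is no real obstacle here beyond keeping track of the two competing pieces of the $I_3$-action on $\operatorname{Ad}^0(\overline{\rho})^*$: trivial on the $\operatorname{Ad}^0$ factor, nontrivial on the $\mu_3$ factor.
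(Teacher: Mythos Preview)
Your proof is correct and follows essentially the same approach as the paper: both reduce via local Tate duality to showing $H^0(G_3,\operatorname{Ad}^0(\overline{\rho})^*)=0$, and both use that $\operatorname{Ad}^0(\overline{\rho})$ is unramified at $3$ so that the $I_3$-action on the dual is through the (ramified) mod-$3$ cyclotomic character. Your write-up simply unpacks in more detail why ``unramified $\Rightarrow$ no invariants on the twist,'' and your remark that only the projectivization need be unramified at $3$ is a correct refinement, though in the paper's setting $\overline{\rho}$ itself is already unramified at $3$.
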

\begin{proof}
Since $\Ad$ is an unramified representation of $G_3$, 
$ \operatorname{Ad}^0(\overline{\rho})^* = \operatorname{Hom}(\operatorname{Ad}^0(\overline{\rho}), \mu_3)$ has no nontrivial Galois invariants; hence $H^0(G_3, \operatorname{Ad}^0(\overline{\rho})^*) = 0$.
Since $H^2(G_3, \operatorname{Ad}^0(\overline{\rho}))$ is dual to $H^0(G_3, \operatorname{Ad}^0(\overline{\rho})^*)$ by local duality, it is trivial as well. 
\end{proof}
\begin{lemma} \label{3}
The local versal deformation ring at 3 is a power series ring over $\Z_3$, and we may let $\mathcal{C}_3$ be the class of all deformations 
of $\overline{\rho}^{(\ell)}|_{G_3}$ and 
$\mathcal{N}_3 =  
H^1(G_3, \operatorname{Ad}^0(\overline{\rho})).$ 
Then
$$
\dim \mathcal{N}_3 - \dim H^0(G_3, \operatorname{Ad}^0(\overline{\rho})) 
= 3. 
$$
\end{lemma}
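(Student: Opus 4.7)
The strategy is to combine the vanishing of $H^2(G_3, \Ad)$ from Lemma \ref{ranksat3} with standard smoothness criteria in deformation theory, and then compute the dimension jump via the local Euler characteristic at $p=3$.

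First, I would recall Mazur's criterion: if $\overline{\rho}|_{G_3}$ has vanishing $H^2(G_3, \Ad)$, then the obstruction space for the local deformation problem at $3$ is trivial, and the versal deformation ring $R_3$ is formally smooth over $\Z_3$ of relative dimension $\dim_{\F_3} H^1(G_3, \Ad)$. Lemma \ref{ranksat3} gives exactly this vanishing, so I can conclude $R_3 \simeq \Z_3[[T_1, \ldots, T_{n_3}]]$ with $n_3 = \dim H^1(G_3, \Ad)$. Because $R_3$ is already smooth, the smooth quotient appearing in Definition \ref{balancedrankn} can be taken to be $R_3$ itself; equivalently, \emph{every} deformation of $\overline{\rho}^{(\ell)}|_{G_3}$ lies in the class $\mathcal{C}_3$, and the corresponding tangent space is the entire $H^1(G_3, \Ad)$. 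This justifies taking $\mathcal{N}_3 = H^1(G_3, \Ad)$.

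For the dimension identity, I would invoke the Tate local Euler characteristic formula. Over $\Q_3$ with coefficients in the finite $3$-torsion module $\Ad$, which has $\F_3$-dimension $3$, it reads
\[
\dim_{\F_3} H^0(G_3, \Ad) - \dim_{\F_3} H^1(G_3, \Ad) + \dim_{\F_3} H^2(G_3, \Ad) = -[\Q_3:\Q_3]\cdot \dim_{\F_3} \Ad = -3.
\]
Substituting $\dim H^2(G_3, \Ad) = 0$ from Lemma \ref{ranksat3} and rearranging gives $\dim H^1(G_3, \Ad) - \dim H^0(G_3, \Ad) = 3$, which is exactly $\dim \mathcal{N}_3 - \dim H^0(G_3, \Ad) = 3$.

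Since both ingredients (Mazur's smoothness criterion in the form stated in \cite{Mazur2}, and Tate's local Euler characteristic) are standard and the needed vanishing is already established in Lemma \ref{ranksat3}, there is no serious obstacle; the only bookkeeping step is to correctly identify the smooth quotient of $R_3$ with $R_3$ itself so that $\mathcal{C}_3$ may be taken to be the class of \emph{all} local deformations, as stated.
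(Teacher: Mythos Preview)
Your proposal is correct and follows essentially the same approach as the paper: the paper's proof also invokes the vanishing of $H^2(G_3,\Ad)$ from Lemma \ref{ranksat3} to conclude $R_3$ is a power series ring over $\Z_3$, and then appeals to the local Euler characteristic for the dimension identity. Your version simply spells out these two steps in more detail.
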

\begin{proof}
The number of relations in the local versal deformation ring $R_3$ at 3 is equal to $\dim_{\F_3} H^2(G_3, \Ad) = 0$; therefore $R_3$ is a power series ring over $\Z_3$.
The remaining statement follows by the local Euler characteristic. 
\end{proof}

\begin{lemma} \label{infty}
The local cohomology groups at $\infty$ have the properties that 
$$
\dim H^0(G_\infty, \operatorname{Ad}^0(\overline{\rho})) = 3,
\quad 
H^2(G_\infty, \operatorname{Ad}^0(\overline{\rho})) = 0,
\quad 
H^1(G_\infty, \operatorname{Ad}^0(\overline{\rho})) = 0.
$$
In particular, 
$$
\mathcal{N}_\infty=0.
$$
\end{lemma}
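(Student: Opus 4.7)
The plan is to exploit evenness directly: since $\overline{\rho}$ is even, $\overline{\rho}(c)\in Z(\operatorname{SL}(2,\F_3))=\{\pm I\}$ for any complex conjugation $c$, and the center acts trivially by conjugation on the adjoint module of traceless $2\times 2$ matrices. Hence $\Ad$ is the trivial $G_\infty$-module.

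With the action trivial, $H^0(G_\infty,\Ad)=\Ad^{G_\infty}=\Ad$, which as the space of traceless $2\times 2$ matrices over $\F_3$ has $\F_3$-dimension $3$. For the higher cohomology, I would invoke the standard fact that for a finite group $G$ and any $G$-module $M$, the order $\#G$ annihilates $H^i(G,M)$ for $i\geq 1$. Here $\#G_\infty=2$ is a unit in $\F_3$, so multiplication by $2$ on $H^i(G_\infty,\Ad)$ is simultaneously zero and invertible, forcing $H^1=H^2=0$. If one prefers an explicit verification, Tate's description of cyclic cohomology gives, with $N=1+c=2$ acting trivially, $H^1(G_\infty,\Ad)=\ker(N)/\operatorname{im}(c-1)=\ker(2)/0=0$ and $H^2(G_\infty,\Ad)=\Ad^{G_\infty}/N\Ad=\Ad/2\Ad=0$, again because $2$ is invertible in $\F_3$.

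Finally, $\mathcal{N}_\infty$ is by definition a subspace of $H^1(G_\infty,\Ad)$, which is zero, so $\mathcal{N}_\infty=0$. There is no real obstacle to this argument; the lemma is essentially a consequence of evenness combined with the fact that $\#G_\infty$ is coprime to the residue characteristic.
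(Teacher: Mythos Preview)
Your argument is correct and matches the paper's approach: the paper likewise observes that $\Ad|_{G_\infty}$ is trivial (phrased there as ``$K$ is totally real'') to get $\dim H^0=3$, and then kills $H^1$ and $H^2$ by the coprimality of $|G_\infty|=2$ with $|\Ad|=3^3$. Your explicit cyclic-cohomology computation is a pleasant bonus but not needed.
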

\begin{proof}
Since $K$ is totally real, $\Ad|_{G_\infty}$ is trivial; hence 
$$
\dim H^0(G_\infty, \operatorname{Ad}^0(\overline{\rho})) = 3.
$$
The groups
$ H^i(G_\infty, \operatorname{Ad}^0(\overline{\rho}))$ for 
$i = 1, 2$ are trivial because 
$G_\infty$ has order 2 and 
$\operatorname{Ad}^0(\overline{\rho})$ has order $3^3$.
\end{proof}

\begin{lemma} \label{3andinfty}
The local Selmer conditions at $3$ and $\infty$ balance each other out in the sense that 
$$
\dim \mathcal{N}_3 - \dim H^0(G_3, \operatorname{Ad}^0(\overline{\rho}))
+ \dim \mathcal{N}_\infty - \dim H^0(G_\infty, \operatorname{Ad}^0(\overline{\rho})) = 0.
$$
\end{lemma}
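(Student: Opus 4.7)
The plan is to prove Lemma \ref{3andinfty} as an immediate arithmetic consequence of Lemmas \ref{3} and \ref{infty}, which have already computed each of the four quantities appearing on the left-hand side.

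First I would recall what Lemma \ref{3} gives us: since $H^2(G_3, \operatorname{Ad}^0(\overline{\rho})) = 0$ (Lemma \ref{ranksat3}), the local deformation ring at $3$ is unobstructed, so we may take $\mathcal{C}_3$ to be the class of all deformations and $\mathcal{N}_3 = H^1(G_3, \operatorname{Ad}^0(\overline{\rho}))$; the local Euler characteristic formula, combined with $H^2 = 0$, then yields
\[
\dim \mathcal{N}_3 - \dim H^0(G_3, \operatorname{Ad}^0(\overline{\rho})) = [\Q_3 : \Q_3] \cdot \dim \operatorname{Ad}^0(\overline{\rho}) = 3.
\]

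Next I would invoke Lemma \ref{infty}: since $\overline{\rho}^{(\ell)}$ is even, the complex conjugation acts trivially on $\operatorname{Ad}^0(\overline{\rho})$, so $H^0(G_\infty, \operatorname{Ad}^0(\overline{\rho}))$ is the full $3$-dimensional space, while $H^1(G_\infty, \operatorname{Ad}^0(\overline{\rho})) = 0$ because the orders of $G_\infty$ and $\operatorname{Ad}^0(\overline{\rho})$ are coprime. In particular $\mathcal{N}_\infty = 0$ and $\dim H^0(G_\infty, \operatorname{Ad}^0(\overline{\rho})) = 3$.

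Substituting both contributions into the balancing condition gives
\[
\dim \mathcal{N}_3 - \dim H^0(G_3, \operatorname{Ad}^0(\overline{\rho})) + \dim \mathcal{N}_\infty - \dim H^0(G_\infty, \operatorname{Ad}^0(\overline{\rho})) = 3 + 0 - 3 = 0,
\]
which is exactly the stated balancing. There is no genuine obstacle here, since all the substantive work occurs in Lemmas \ref{3} and \ref{infty}; the lemma is essentially a bookkeeping step verifying that the contribution from $p=3$ (coming from the local Euler characteristic) is exactly cancelled by the contribution from the archimedean place (coming from evenness), so that $\mathcal{N}_3$ and $\mathcal{N}_\infty$ balance in the sense of condition (\ref{balance2}) earlier in the section.
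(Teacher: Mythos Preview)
Your proof is correct and follows exactly the paper's approach: the paper's proof of Lemma \ref{3andinfty} is simply ``This follows from Lemma \ref{3} and Lemma \ref{infty},'' and you have spelled out precisely that deduction, computing $3 + 0 - 3 = 0$.
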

\begin{proof}
This follows from Lemma \ref{3} and Lemma \ref{infty}.
\end{proof}

Next, Wiles' formula is recalled; it appears in \cite[Prop. 1.6]{Wiles}.
\begin{proposition}[Wiles' formula] \label{Wiles formula}
    Let $S$ be a finite set of places containing $p$ and the archimedian place. 
    Let $M$ be a torsion Galois module.
    For $v\in S,$ let $\mathcal{L}_v$ be a subspace of $H^1(\Gal(\overline{\Q}_v/\Q), M)$ with annihilator $\mathcal{L}_v^\perp \subseteq H^1(\Gal(\overline{\Q}_v/\Q), M^*).$ 
Then
\begin{IEEEeqnarray*}{lCl}
 & & \dim H^1_\mathcal{L} (\Gal(\Q_S/\Q), M) - \dim H^1_{\mathcal{L}^\perp} (\Gal(\Q_S/\Q), M^*)\\
 & = & \dim H^0 (\Gal(\Q_S/\Q), M) - \dim H^0 (\Gal(\Q_S/\Q), M^*) \\
 & + &  \sum_{v\in S} \dim \mathcal{L}_v - \dim H^0(\Gal(\overline{\Q}_v/\Q_v), M).  
\end{IEEEeqnarray*}
\end{proposition}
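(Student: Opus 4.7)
The plan is to derive Wiles' formula as a formal consequence of the Poitou-Tate nine-term exact sequence combined with local Tate duality and the local/global Euler-Poincar\'e characteristic formulas. Once the main machinery is in place, the proof reduces to a bookkeeping exercise in dimensions.

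Concretely, the starting point is the Poitou-Tate exact sequence for the finite Galois module $M$,
\[
\cdots \to H^i(G_S, M) \to \bigoplus_{v\in S} H^i(G_v, M) \to H^{2-i}(G_S, M^*)^\vee \to \cdots \qquad (i=0,1,2).
\]
By local Tate duality, the quotient $\bigoplus_v H^1(G_v, M)/\mathcal{L}_v$ is Pontryagin dual to $\bigoplus_v \mathcal{L}_v^\perp \subseteq \bigoplus_v H^1(G_v, M^*)$. Projecting the middle arrow of Poitou-Tate onto this quotient, the kernel of the composition $H^1(G_S, M) \to \bigoplus_v H^1(G_v, M)/\mathcal{L}_v$ is, by definition, $H^1_\mathcal{L}(G_S, M)$. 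A standard diagram chase, using the duality identification, then yields the Greenberg-Wiles seven-term exact sequence
\begin{align*}
0 \to H^1_\mathcal{L}(G_S, M) &\to H^1(G_S, M) \to \bigoplus_v H^1(G_v, M)/\mathcal{L}_v \to H^1_{\mathcal{L}^\perp}(G_S, M^*)^\vee \\
&\to H^2(G_S, M) \to \bigoplus_v H^2(G_v, M) \to H^0(G_S, M^*)^\vee \to 0.
\end{align*}

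Next, I would take the alternating sum of $\F_p$-dimensions along this exact sequence. This expresses $\dim H^1_\mathcal{L} - \dim H^1_{\mathcal{L}^\perp}$ in terms of $\dim H^i(G_S, M)$ for $i=1,2$, of $\dim H^0(G_S, M^*)$, of the local dimensions $\dim H^i(G_v, M)$ for $i=1,2$, and of the $\dim \mathcal{L}_v$. Substituting the global Euler-Poincar\'e formula for $\sum_i (-1)^i \dim H^i(G_S, M)$ together with the local Euler-Poincar\'e formulas relating $\dim H^1(G_v, M)$ to $\dim H^0(G_v, M)$ and $\dim H^2(G_v, M)$, the $H^1$ and $H^2$ contributions cancel against each other, and what remains is precisely
\[
\dim H^0(G_S, M) - \dim H^0(G_S, M^*) + \sum_{v\in S}\bigl[\dim \mathcal{L}_v - \dim H^0(G_v, M)\bigr].
\]

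The main obstacle is bookkeeping: one must track the signs carefully and verify that the discrepancies between the local and global Euler characteristic formulas at the archimedean places and at the places dividing $p$ cancel out to leave only the $H^0$ terms appearing in the statement. Since $M$ is a finite torsion module, there are no subtleties with infinite-dimensional spaces, and the argument reduces to this dimension count once the Greenberg-Wiles exact sequence is in hand.
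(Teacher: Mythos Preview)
Your proposal outlines the standard derivation of the Greenberg--Wiles formula via the Poitou--Tate exact sequence and the local and global Euler--Poincar\'e characteristic formulas, and the sketch is correct in outline. However, the paper does not actually prove this proposition: it is stated as a recalled result with a citation to \cite[Prop.~1.6]{Wiles}, and no proof is given in the paper itself. So there is nothing to compare against on the paper's side; your argument is essentially the one found in the cited reference (and in the standard literature, e.g.\ Washington's exposition in the Cornell--Silverman--Stevens volume or Neukirch--Schmidt--Wingberg).
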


\begin{lemma}\label{balanced}
Let $S=\{\ell, 3, \infty\}$.
The global setting is balanced in the sense that 
$$
\dim H^1_{\mathcal{N}} (G_S, \operatorname{Ad}^0(\overline{\rho}))
=\dim H^1_{\mathcal{N}^\perp} (G_S, \operatorname{Ad}^0(\overline{\rho})^*).$$
\end{lemma}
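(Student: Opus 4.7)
The plan is to apply Wiles' formula (Proposition \ref{Wiles formula}) with $M = \Ad$, $S = \{\ell, 3, \infty\}$, and $\mathcal{L}_v = \mathcal{N}_v$, and then verify that every term on the right-hand side vanishes.

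First I would handle the sum of local contributions. At $v = \ell$, the final equality in Lemma \ref{localatl} gives $\dim \mathcal{N}_\ell - \dim H^0(G_\ell, \operatorname{Ad}^0(\overline{\rho}^{(\ell)})) = 0$, which, under the isomorphism $\operatorname{Ad}^0(\overline{\rho}^{(\ell)}) \simeq \Ad$ of Galois modules observed at the start of Subsection 2.2, is the required local contribution at $\ell$. At $v \in \{3, \infty\}$, the two contributions combine to $0$ by Lemma \ref{3andinfty}. Thus the full local sum $\sum_{v \in S}\bigl(\dim \mathcal{N}_v - \dim H^0(G_v, \Ad)\bigr)$ vanishes.

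Next I would show that both global $H^0$ terms are zero. Because $\overline{\rho}$ is of tetrahedral type and $\operatorname{SL}(2,\F_3)$ is the nonsplit binary tetrahedral extension of $A_4 \simeq \operatorname{PSL}(2,\F_3)$ by $\{\pm I\}$, the image of $\overline{\rho}$ must be all of $\operatorname{SL}(2,\F_3)$. Any invariant in $\Ad$ then commutes with the whole image, hence is scalar and traceless, hence zero; thus $H^0(G_S, \Ad) = 0$. For the dual, the non-degenerate trace form yields $\Ad^* \simeq \Ad \otimes \mu_3$ as Galois modules; and since $\operatorname{SL}(2,\F_3)^{\mathrm{ab}} \simeq \Z/3\Z$ admits no index-$2$ quotient, the restriction $\overline{\rho}|_{G_{\Q(\zeta_3)}}$ remains surjective onto $\operatorname{SL}(2,\F_3)$. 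A $G_\Q$-invariant $v \otimes \zeta_3 \in \Ad \otimes \mu_3$ must, in particular, be $G_{\Q(\zeta_3)}$-invariant, which forces $v$ to be fixed by the full image and hence $v = 0$.

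Putting these together in Wiles' formula gives $\dim H^1_\mathcal{N}(G_S, \Ad) = \dim H^1_{\mathcal{N}^\perp}(G_S, \Ad^*)$, as desired. The only conceptually substantive step is the dual $H^0$ vanishing, which hinges on the fact that $\overline{\rho}|_{G_{\Q(\zeta_3)}}$ remains surjective onto $\operatorname{SL}(2,\F_3)$; once that is in hand, the rest is a short bookkeeping exercise combining the local lemmas already established.
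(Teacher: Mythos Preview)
Your proposal is correct and follows essentially the same route as the paper: apply Wiles' formula, invoke Lemmas \ref{localatl} and \ref{3andinfty} for the local terms, and verify that both global $H^0$ terms vanish. The paper simply asserts $H^0(G_S,\Ad)=H^0(G_S,\Ad^*)=0$ from irreducibility and full image, whereas you supply an explicit argument for the dual via $\Ad^*\simeq\Ad\otimes\mu_3$ and the absence of index-$2$ subgroups in $\operatorname{SL}(2,\F_3)$; this extra detail is fine and in fact clarifies a point the paper leaves implicit.
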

\begin{proof}
Since $\overline{\rho}^{(\ell)}$ is irreducible and has image containing $\operatorname{SL}(2, \F_3)$, 
$$
\dim H^0 (G_S, \operatorname{Ad}^0(\overline{\rho})) =0, 
\quad \dim H^0 (G_S, \operatorname{Ad}^0(\overline{\rho})^*) =0.
$$
By Lemma \ref{3andinfty}, 
$$
\dim \mathcal{N}_3 - \dim H^0(G_3, \operatorname{Ad}^0(\overline{\rho}))
+ \dim \mathcal{N}_\infty - \dim H^0(G_\infty, \operatorname{Ad}^0(\overline{\rho})) = 0.
$$
By Lemma \ref{localatl},
$$
\dim \mathcal{N}_\ell - \dim H^0(G_\ell, \operatorname{Ad}(\overline{\rho}))= 0.
$$ 
By Wiles' formula (Proposition \ref{Wiles formula}), 
\begin{IEEEeqnarray*}{ll}
 & \dim H^1_\mathcal{N} (G_S, \operatorname{Ad}^0(\overline{\rho})) 
 - \dim H^1_{\mathcal{N}^\perp} (G_S, \operatorname{Ad}^0(\overline{\rho})^*) \\
= \quad & 
\dim H^0 (G_S, \operatorname{Ad}^0(\overline{\rho})) 
- \dim H^0 (G_S, \operatorname{Ad}^0(\overline{\rho})^*)
+
\sum_{q\in S} \dim \mathcal{N}_q - \dim H^0(G_q, \operatorname{Ad}^0(\overline{\rho})) \\
= \quad & 0.
\end{IEEEeqnarray*}
\end{proof}

\section{Classification}

\subsection{Preliminary lemmas}

\begin{lemma}[Classification of absolutely irreducible $A_4$-representations] \label{a4reps}
Up to equivalence, the absolutely irreducible $A_4$-representations over the field $\F_3$ are: 
\begin{enumerate}
    \item The trivial representation, $\F_3$
\item The standard 3-dimensional representation. 
If $\overline{\rho}$ is an irreducible representation  $\Gal(\overline{\Q}/\Q) \to \operatorname{SL}(2,\F_3)$, then the standard representation is isomorphic to the adjoint representation $\Ad$.
\end{enumerate}
\end{lemma}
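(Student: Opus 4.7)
The plan is to reduce the claim to a standard exercise in modular representation theory, using Brauer's theorem to count absolutely irreducible representations and then identifying them explicitly. Since $|A_4| = 12 = 2^2 \cdot 3$ and we work in characteristic $3$, this is a genuinely modular (non-semisimple) situation, so ordinary character theory does not directly apply.

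First, I would count. By Brauer's theorem, the number of absolutely irreducible $A_4$-representations over $\overline{\F}_3$ equals the number of $3$-regular conjugacy classes of $A_4$, i.e., classes of elements whose order is prime to $3$. The conjugacy classes of $A_4$ are $\{e\}$, the class of the three double transpositions in $V_4$, and two classes of $3$-cycles; only the first two are $3$-regular, so there are exactly two absolutely irreducible representations. The trivial representation $\F_3$ accounts for one of them. For the other, I would invoke the isomorphism $A_4 \simeq \operatorname{PSL}(2,\F_3)$ together with the description of the defining-characteristic irreducibles of $\operatorname{SL}(2,\F_p)$ as the symmetric powers $\operatorname{Sym}^k(\F_p^2)$ for $0 \le k \le p-1$; those on which the centre $\{\pm I\}$ acts trivially are precisely the ones with $k$ even. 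For $p=3$ this leaves $\operatorname{Sym}^0 = \F_3$ and the $3$-dimensional module $\operatorname{Sym}^2$, which in characteristic different from $2$ is isomorphic as an $\operatorname{SL}(2,\F_3)$-module to the space $\mathfrak{sl}_2(\F_3)$ of traceless $2\times 2$ matrices under conjugation. Both representations are evidently defined over $\F_3$, so the count is sharp over $\F_3$ as well. As a sanity check, one can directly verify that the standard permutation representation of $A_4$ on the four vertices of a tetrahedron, modulo the fixed line, remains irreducible over $\F_3$ by confirming that no line is preserved by the action of the double transpositions.

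Finally, for the second assertion: when $\overline{\rho}$ has image containing $\operatorname{SL}(2,\F_3)$ (as is always the case for tetrahedral type in the paper), the conjugation action on traceless $2\times 2$ matrices factors through $\operatorname{PSL}(2,\F_3) = A_4$ because $-I$ acts trivially, and the resulting $3$-dimensional $A_4$-module is exactly the $\operatorname{Sym}^2$ module just described, hence absolutely irreducible. By the classification, it must then be the standard $3$-dimensional representation. I do not anticipate any substantive obstacle here; the only point to be careful about is the absolute irreducibility of $\Ad$, which follows cleanly from the $\operatorname{Sym}^k$ classification in defining characteristic, so nothing peculiar to $\F_3$ versus $\overline{\F}_3$ intervenes.
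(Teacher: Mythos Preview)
Your argument is correct and entirely self-contained. The paper itself does not prove this lemma at all; it simply cites an external reference (\cite{repA4}). By contrast, you give a direct modular-representation-theoretic proof: Brauer's count of $3$-regular classes shows there are exactly two absolutely irreducible $A_4$-modules over $\overline{\F}_3$, and the defining-characteristic classification of $\operatorname{SL}(2,\F_p)$-irreducibles as $\operatorname{Sym}^k(\F_p^2)$ with $0 \le k \le p-1$ identifies them as the trivial module and $\operatorname{Sym}^2 \cong \mathfrak{sl}_2(\F_3)$, both visibly defined over $\F_3$. This buys the reader an actual argument in place of a bare citation, and the $\operatorname{Sym}^2 \cong \mathfrak{sl}_2$ identification (valid since $\operatorname{char}\,\F_3 \neq 2$) makes the link to $\Ad$ immediate. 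One small caveat: the lemma as stated assumes only that $\overline{\rho}$ is irreducible, not surjective, and irreducible subgroups of $\operatorname{SL}(2,\F_3)$ need not be all of $\operatorname{SL}(2,\F_3)$ (e.g.\ the quaternion subgroup $Q_8$); but as you note, in the paper $\overline{\rho}$ is always of tetrahedral type, hence surjective, and in any case the identification of $\mathfrak{sl}_2(\F_3)$ under conjugation with the standard $A_4$-module is a statement about $\operatorname{PSL}(2,\F_3)\simeq A_4$ independently of any particular $\overline{\rho}$.
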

\begin{proof}
    See \cite{repA4}.
\end{proof}
Note that if $Y$ is any $\F_3[A_4]$ module, and if $\operatorname{Ad}^0$ occurs in the Jordan-H\"{o}lder sequence of $Y$, then the sequence splits and $\operatorname{Ad}^0$ is a direct summand \cite[Lemma 3]{even1}.

\begin{lemma} \label{globalH^1}
Let $\ell \equiv  1  \bmod 3$ be a prime and define $S=\{3, \ell, \infty \}$ and $T=\{3,\ell\}$. 
Let $\overline{\rho}: G_S \to \operatorname{SL}(2,\F_3)$ be a surjective even representation of tetrahedral type 
unramified away from $\ell$.
If $h\in H^1(G_T, \Ad)$ is ramified at $\ell$, then
\[
\dim H^1(G_T, \Ad) - \dim H^1_\mathcal{N}(G_S, \Ad) = 1.
\]
\end{lemma}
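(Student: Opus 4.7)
The plan is to reduce the dimension comparison to a single local quotient at $\ell$ and then invoke the hypothesis. First I would show that the archimedean place contributes nothing, so that $H^1(G_T,\Ad) = H^1(G_S,\Ad)$. Since $\overline{\rho}$ is unramified away from $\ell$, the module $\Ad$ factors through $G_T$, and $N := \Gal(\Q_S/\Q_T)$ is topologically generated by complex conjugations. Any continuous homomorphism from $N$ into the $3$-elementary abelian group $\Ad$ is therefore trivial, so inflation-restriction gives $H^1(G_T,\Ad)\simeq H^1(G_S,\Ad)$. Alternatively, one can simply cite Lemma \ref{infty}, which states $H^1(G_\infty,\Ad)=0$.

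Next I would unwind the Selmer condition $\mathcal{N}$ place by place. Lemma \ref{3} gives $\mathcal{N}_3 = H^1(G_3,\Ad)$, imposing no cutting at $3$; Lemma \ref{infty} gives $H^1(G_\infty,\Ad)=0$, making $\mathcal{N}_\infty = 0$ automatic; and the local analysis at $\ell$ preceding Lemma \ref{localatl} gives $\mathcal{N}_\ell = H^1_{\operatorname{unr}}(G_\ell,\Ad)$ with $\dim H^1(G_\ell,\Ad)=2$ and $\dim H^1_{\operatorname{unr}}(G_\ell,\Ad)=1$. Consequently
\[
H^1_{\mathcal{N}}(G_S,\Ad) \;=\; \ker\!\Bigl(H^1(G_S,\Ad) \xrightarrow{\;\operatorname{res}_\ell\;} H^1(G_\ell,\Ad)/H^1_{\operatorname{unr}}(G_\ell,\Ad)\Bigr),
\]
and the target of $\operatorname{res}_\ell$ is one-dimensional over $\F_3$.

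To finish, the quotient $H^1(G_S,\Ad)/H^1_{\mathcal{N}}(G_S,\Ad)$ embeds into this one-dimensional space, so its dimension is $0$ or $1$. The assumed class $h\in H^1(G_T,\Ad)=H^1(G_S,\Ad)$ is by hypothesis ramified at $\ell$, meaning its image in $H^1(G_\ell,\Ad)/H^1_{\operatorname{unr}}(G_\ell,\Ad)$ is nonzero; so the image of $\operatorname{res}_\ell$ has dimension exactly one. Combined with the identification of the first paragraph, this yields $\dim H^1(G_T,\Ad) - \dim H^1_{\mathcal{N}}(G_S,\Ad) = 1$.

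There is no serious obstacle; the lemma is essentially a dimension count after unpacking the local Selmer conditions. The only step that requires a moment of thought is matching $H^1(G_T,\Ad)$ with $H^1(G_S,\Ad)$, and this is forced by the archimedean local cohomology having already been shown to vanish.
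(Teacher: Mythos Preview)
Your argument is correct. The identification $H^1(G_T,\Ad)\simeq H^1(G_S,\Ad)$ is sound: since $\overline{\rho}$ is even, $\Ad$ factors through $G_T$, so the restriction of any cocycle to $N=\Gal(\Q_S/\Q_T)$ is a genuine homomorphism, and as $N$ is topologically generated by complex conjugations (all of order dividing $2$) while $\Ad$ is $3$-torsion, this restriction vanishes. After that, your kernel description of $H^1_{\mathcal{N}}(G_S,\Ad)$ and the dimension count are straightforward.

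The paper proceeds differently. Instead of directly recognising $H^1_{\mathcal{N}}(G_S,\Ad)$ as the kernel of a map into a one-dimensional local quotient, it applies Wiles' formula twice --- once with the Selmer datum $\mathcal{N}$ and once with the relaxed datum $\mathcal{L}$ where $\mathcal{L}_\ell = H^1(G_\ell,\Ad)$ --- and subtracts, bounding the difference via the inequality $\dim H^1_{\mathcal{L}^\perp}\leq \dim H^1_{\mathcal{N}^\perp}$ on the dual side. Your route is more elementary and avoids invoking Wiles' formula or the dual Selmer groups for this particular lemma; the paper's route keeps everything inside the same Greenberg--Wiles framework used throughout the section. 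Both yield the bound $\leq 1$, and both then use the ramified class $h$ to pin down equality.
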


\begin{proof}
Since $\overline{\rho}$ surjects onto $\operatorname{SL}(2,\F_3)$ it must be irreducible. 
In particular, there are no global Galois invariants:
$$H^0(G_S, \Ad) = H^0(G_S, \Ad^*) = 0.$$ 
Take $\mathcal{N}_\ell = H^1_{\operatorname{unr}}(G_\ell,\Ad)$, $\mathcal{N}_\infty = 0$
and $\mathcal{N}_3 = H^1(G_3,\Ad)$. Lemma \ref{balanced} showed that  
\begin{IEEEeqnarray*}{rCl}
\dim H^1_\mathcal{N}(G_S,\Ad) - \dim H^1_{\mathcal{N}^\perp}(G_S, \Ad^*) &=&0. 
\end{IEEEeqnarray*}
Take $\mathcal{L}_\ell = H^1(G_\ell,\Ad)$
with annihilator $\mathcal{L}_\ell^\perp = 0$,
$\mathcal{L}_\infty = 0$, $\mathcal{L}_3 = H^1(G_3,\Ad)$
and use Wiles' formula to obtain
\begin{align*}
\dim H^1_{\mathcal{L}}(G_S,\Ad) - \dim H^1_{\mathcal{L}^\perp}(G_S, \Ad^*) = 1.
\end{align*}
Note that 
$
\dim H^1_{\mathcal{L}^\perp}(G_S, \Ad^*) \leq  \dim H^1_{\mathcal{N}^\perp}(G_S, \Ad^*)
$
and that $$H^1_\mathcal{L}( G_S, \Ad) \simeq H^1(G_T,\Ad).$$ 
Hence 
\begin{IEEEeqnarray*}{rCl}
0 & \leq &
\dim H^1(G_T,\Ad) - \dim H^1_{\mathcal{N}}(G_S,\Ad) \\  
& = &  1 - (\dim H^1_{\mathcal{N}^\perp}(G_S, \Ad^*) - \dim H^1_{\mathcal{L}^\perp}(G_S, \Ad^*)  ) \leq 1.
\end{IEEEeqnarray*}
We conclude that if $h \in H^1(G_T,\Ad)$ is ramified at $\ell$ then 
\[
\dim H^1(G_T, \Ad) - \dim H^1_\mathcal{N}(G_S, \Ad) = 1.
\]
\end{proof}

\subsection{Existence of the series of even representations}

\begin{theorem} \label{goodell}
    Let $\ell \equiv 1 \mod 3$ be a prime with the following properties:
    \begin{enumerate}
        \item \label{h_L} Let $L$ be the unique cubic totally real subfield 
        of $\Q(\zeta_\ell)$. 
        The class number $h_L$ of $L$ is divisible by 2. 
        In particular,
        there is an $A_4$-extension $K/\Q$ and
        $\overline{\rho}^{(\ell)}$ such that $K=\Q(\pi  \circ \overline{\rho}^{(\ell)})$.
        \item  \label{ontonew} 
        Let $T=\{3, \ell\}$, and let $K_T^{(3)}$ be the maximal 3-elementary abelian extension of $K$ unramified away from the places in $K$ above $T$. In the decomposition
        $$\Gal(K_T^{(3)}/K) 
    \simeq \operatorname{Ad}^0(\overline{\rho}^{(\ell)})^m \oplus \F_3^n
        $$
        as $\F_3[A_4]$-modules, there is at least one adjoint representation, i.e. 
        $m \geq 1.$
        \item \label{dependonellnew}
        If $M$ is an extensions of $K$ with 
        $$\Gal(M/K)\simeq \operatorname{Ad}^0(\overline{\rho}^{(\ell)})$$ 
        as $\Gal(K/\Q)$-modules then $M/K$ is ramified at $\ell$. 
     \end{enumerate}
Letting $S= \{ \infty, 3, \ell \},$
the global setting is balanced of rank zero, i.e. 
$$
\dim H^1_\mathcal{N}(G_S, 
\Adl
) = \dim H^1_{\mathcal{N}^\perp}(G_S, \Adl^* ) = 0,
$$
and
$$R_{(\mathcal{N})_{v\in S}} \simeq \Z_3.$$
If $\rho^{(\ell)}$ 
denotes the universal deformation of $\overline{\rho}^{(\ell)}$ for which 
$\rho^{(\ell)}|_{G_v}\in \mathcal{C}_v$ for all $v\in S,$ then  
$$\rho^{(\ell)}: G_S \to \operatorname{SL}(2, \Z_3)$$ 
is surjective. 
\end{theorem}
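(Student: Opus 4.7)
My plan is to deduce rank zero by collapsing the Selmer group, then read off $R_{(\mathcal{N})_{v\in S}} \simeq \Z_3$ and show that its universal deformation has full image. Lemma \ref{balanced} already gives the balance $\dim H^1_\mathcal{N} = \dim H^1_{\mathcal{N}^\perp}$, so it suffices to prove $H^1_\mathcal{N}(G_S, \Adl) = 0$; triviality of $H^1_{\mathcal{N}^\perp}$ then comes for free.

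To compute $H^1(G_T, \Adl)$, I would pass to $K = \Q(\pi \circ \overline{\rho}^{(\ell)})$ via inflation--restriction. Since $G_K$ lands in $Z(\operatorname{SL}(2,\F_3)) = \{\pm I\}$ and the center acts trivially on traceless matrices, $\Adl$ descends to the standard absolutely irreducible $\F_3[A_4]$-module of Lemma \ref{a4reps}. Using Hochschild--Serre for the normal Klein four-group $V \lhd A_4$, together with $\gcd(|V|, 3) = 1$, one gets $H^i(A_4, \Adl) \simeq H^i(\Z/3\Z, \Adl^V)$; a direct check shows that $\Adl$ restricted to $V$ is the sum of the three nontrivial $\F_3$-characters of $V$, so $\Adl^V = 0$ and both $H^1(A_4, \Adl)$ and $H^2(A_4, \Adl)$ vanish. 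Inflation--restriction then produces an isomorphism
\[
H^1(G_T, \Adl) \;\simeq\; \operatorname{Hom}_{\F_3[A_4]}(\Gal(K_T^{(3)}/K), \Adl),
\]
and Schur's lemma applied to the decomposition in hypothesis (\ref{ontonew}) identifies the right-hand side with $\F_3^m$.

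Hypothesis (\ref{dependonellnew}) guarantees that every $A_4$-equivariant quotient $\Gal(K_T^{(3)}/K) \twoheadrightarrow \Adl$ cuts out an extension of $K$ ramified at $\ell$, so at least one class in $H^1(G_T, \Adl)$ fails the unramified Selmer condition $\mathcal{N}_\ell$. Lemma \ref{globalH^1} then saturates its universal bound, yielding $\dim H^1_\mathcal{N}(G_S, \Adl) = m - 1$. I expect the main obstacle to be pinning down $m = 1$: hypothesis (\ref{ontonew}) only supplies $m \geq 1$, and additional multiplicity would leave surviving Selmer classes and thus extra deformation parameters. Establishing $m = 1$ is the arithmetic crux and must rest on the finer class-field-theoretic structure of $\Gal(K_T^{(3)}/K)$, mirroring the explicit computation carried out for $\ell = 349$ in \cite{even1}. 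With $m = 1$ in hand, the Selmer rank vanishes and, by balance, so does $H^1_{\mathcal{N}^\perp}(G_S, \Adl^*)$.

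Finally, rank zero forces the global deformation ring $R_{(\mathcal{N})_{v\in S}}$ to have trivial tangent space; combined with the nonzero banal lift provided by $H^2(\operatorname{SL}(2,\F_3), \operatorname{Ad}^0(2,\F_3)) = 0$ recalled in the introduction, the discussion following Definition \ref{balancedrankn} forces $R_{(\mathcal{N})_{v\in S}} \simeq \Z_3$. The associated universal deformation $\rho^{(\ell)}: G_S \to \operatorname{SL}(2, \Z_3)$ reduces mod $3$ to the surjective $\overline{\rho}^{(\ell)}$, and a standard Nakayama/Frattini argument on the pro-$3$ kernel of reduction promotes residual surjectivity to surjectivity onto $\operatorname{SL}(2, \Z_3)$, completing the proof.
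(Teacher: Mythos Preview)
Your central gap is in how you use hypothesis (\ref{dependonellnew}). You correctly read it as saying that \emph{every} $A_4$-equivariant quotient $\Gal(K_T^{(3)}/K)\twoheadrightarrow\Adl$ is ramified at $\ell$, but you then extract only the weaker statement that \emph{at least one} class in $H^1(G_T,\Adl)$ fails $\mathcal{N}_\ell$, apply Lemma~\ref{globalH^1} to get $\dim H^1_\mathcal{N}=m-1$, and declare $m=1$ an unresolved ``arithmetic crux.'' In fact no extra input is needed. Each nonzero $f\in H^1(G_T,\Adl)\simeq\operatorname{Hom}_{\F_3[A_4]}(\Gal(K_T^{(3)}/K),\Adl)$ cuts out such a quotient, and a short computation with the cocycle relation on $\tau_\ell$ shows that the associated field $M_f/K$ is ramified at $\ell$ precisely when $f|_{G_\ell}\notin\mathcal{N}_\ell$. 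So hypothesis (\ref{dependonellnew}) already says \emph{every} nonzero $f$ fails $\mathcal{N}_\ell$, i.e.\ $H^1_\mathcal{N}(G_S,\Adl)=0$ directly. This is the paper's argument; your codimension bound from Lemma~\ref{globalH^1} then forces $m=1$ a posteriori, not as a hypothesis.

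Two further slips at the end. First, the banal lift $\iota\circ\overline{\rho}^{(\ell)}$ does \emph{not} lie in $\mathcal{C}_\ell$: it sends $\tau_\ell$ to an element of order $3$, whereas $\mathcal{C}_\ell$ demands $\rho(\tau_\ell)=\begin{psmallmatrix}1&1\\0&1\end{psmallmatrix}$ in $\operatorname{SL}(2,\Z_3)$, of infinite order. So the banal lift is not a $\Z_3$-point of $R_{(\mathcal{N})_{v\in S}}$ and cannot be used to exclude $R\simeq\Z/3^n\Z$. The paper instead constructs the $\Z_3$-point inductively: vanishing of the dual Selmer group plus Poitou--Tate makes $H^2(G_S,\Adl)\to\bigoplus_v H^2(G_v,\Adl)$ injective and $H^1(G_S,\Adl)\to\bigoplus_v H^1(G_v,\Adl)/\mathcal{N}_v$ surjective, so one can lift and then adjust back into $\mathcal{C}_v$ at every stage. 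Second, ``residual surjectivity plus Frattini'' is not enough for surjectivity onto $\operatorname{SL}(2,\Z_3)$; the banal lift is again the obstruction to such a bare statement. One must first see that the image modulo $9$ is strictly larger than $\operatorname{SL}(2,\F_3)$, and this is exactly where $\mathcal{C}_\ell$ is used: it forces $\rho^{(\ell)}(\tau_\ell)$ to have order $9$ in $\operatorname{SL}(2,\Z/9\Z)$, after which irreducibility of $\operatorname{Fr}(\Gamma)\simeq\Adl$ and the Burnside basis theorem finish the argument.
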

\begin{proof}
In condition (\ref{ontonew}),
Lemma \ref{a4reps} is applied to obtain a decomposition  
$$\Gal(K_T^{(3)}/K) 
    \simeq \operatorname{Ad}^0(\overline{\rho}^{(\ell)})^m \oplus \F_3^n
        $$
as $\F_3[A_4]$-modules for some integers $m,n \geq 0.$
We know from Lemma \ref{balanced} that the global setting is balanced (according to Definition \ref{balancedrankn}). 
By Lemma \ref{localatl}, the Selmer condition 
$\mathcal{N}_\ell$ equals the unramified local cohomology classes $H^1_{\operatorname{unr}}(G_\ell, \Ad).$ 
Condition (\ref{dependonellnew}) is equivalent 
to the statement any  global cohomology class  $ f\in H^1(G_S, \Ad)$ is ramfied at $\ell,$ i.e.
$f|_{G_\ell} \not \in \mathcal{N}_\ell.$
We conclude that the global setting is balanced of rank zero.
To construct the lift $\rho^{(\ell)}$, we will show by induction 
that for all integers $n \geq 1$ there exists a deformation $\rho_n : G_S\to \operatorname{SL}(2,\Z/3^n\Z)$
such that $\rho_n|_{G_v} \in \mathcal{C}_v$ for all $v\in S$.
Note that when $n=1$, this statement is a fact since 
$\rho_1 = \overline{\rho}$. 
Suppose $\rho_n: G_S\to \operatorname{SL}(2,\Z/3^n\Z)$ is a deformation of $\overline{\rho}$ such that $\rho_n|_{G_v} \in \mathcal{C}_v$ for all $v\in S$.
The obstruction to lifting $\rho_n$ to $\Z/3^{n+1}\Z$ defines a global cohomology class in $H^2(G_S,\Ad)$ whose restriction to a local cohomology class in $H^2(G_v,\Ad)$ vanishes, since $\rho_n|_{G_v} \in \mathcal{C}_v$ is always liftable for all $v\in S$. Since the dual Selmer group is trivial, the Poitou-Tate exact sequence implies that 
the linear restriction map
$$
H^2(G_S, \Ad) \to \bigoplus_{v\in S} H^2(G_v, \Ad)
$$
is injective. Thus, the global obstruction vanishes as well, and there exists a deformation $\rho_{n+1}: G_S\to \operatorname{SL}(2,\Z/3^{n+1}\Z)$ of $\rho_n$. 
For all $v\in S$, choose $f_v \in H^1(G_v,\Ad)$ such that 
$(I+3^n f_v)\rho_{n+1}|_{G_v} \in \mathcal{C}_v$. 
The Poitou-Tate exact sequence implies that 
the restriction map
$$
H^1(G_S, \Ad) \to \bigoplus_{v\in S}  H^1(G_v, \Ad) / \mathcal{N}_v
$$
is surjective, so there exists $f\in H^1(G_S,\Ad)$ such that 
$f|_{G_v} - f_v \in \mathcal{N}_v$ for all $v\in S$. 
In particular, $(I+3^n f)\rho_{n+1}$ is a global deformation of $\rho_n$
that belongs to $\mathcal{C}_v$ when restricted to $G_v$ for all $v\in S$. This completes the inductive step. 
In summary, the sequence $(\rho_n)_{n \geq 1}$ forms a projective system since
$\rho_n \equiv \rho_{n-1}$ for all $n \geq 2$; let $\rho^{(\ell)} := \lim \rho_n$ be the projective limit. The fact that $\rho_n|_{G_v} \in \mathcal{C}_v$ for all $n$ implies that
$\rho^{(\ell)}|_{G_v} \in \mathcal{C}_v$ as well whenever $v\in S$.
It remains to prove that $\rho^{(\ell)}$ surjects onto $\operatorname{SL}(2,\Z_3)$.
Let $N$ be the field fixed by the kernel of $\overline{\rho}^{(\ell)}$.
As noted in the introduction, the quadratic, totally real extension $N/K$ exists by the fundamental exact sequence of class field theory, using that $K$ is ramified at exactly one prime.
To prove surjectivity, it suffices to show that $\rho^{(\ell)}$ maps
$\Gal(\Q_S /N)$ onto the principal congruence subgroup $\Gamma = \{ g\in \operatorname{SL}(2,\Z_3): g \equiv I \bmod 3\}$ of $\operatorname{SL}(2, \Z_3).$ 
For any ring $R$, let $\operatorname{Ad}^0(2,R)$ be the additive group of $2\times 2$ matrices with coefficients in $R$ of trace zero.
The group $\Gamma$ is a pro-$3$ group with 3 topological generators, as can be seen by noting that there is an isomorphism
$$\Gamma 
\simeq \lim_{\xleftarrow{}} 
\left( \ker (
\operatorname{SL}(2,\Z/3^{n}\Z) \to 
\operatorname{SL}(2,\Z/3\Z)) 
\right)
$$
of topological groups 
and defining
\[
\Gamma_n := \left( \ker (
\operatorname{SL}(2,\Z/3^{n}\Z) \to 
\operatorname{SL}(2,\Z/3\Z)) 
\right),
\]
there are short exact sequences 
\[1 \to 
\ker \left(
\operatorname{SL}(2,\Z/3^{n+1}\Z) \to 
\operatorname{SL}(2,\Z/3^n \Z) \right)
\to 
\Gamma_{n+1}
\to 
\Gamma_n 
\to 1
\]
for every $n \geq 1$. 
The statement then follows by induction, after noting that
$$
\ker \left(
\operatorname{SL}(2,\Z/3^{n+1}\Z) \to 
\operatorname{SL}(2,\Z/3^n \Z) \right)
= I+3^n \operatorname{Ad}^0(2,\Z/3^{n+1}\Z)
\simeq  \operatorname{Ad}^0(2,\F_3)
$$
is a vector space over $\F_3$ with 
$\dim_{\F_3} \operatorname{Ad}^0(2,\F_3) = 3$.

By the Burnside basis theorem, it suffices to prove that $\rho_2 := (\rho^{(\ell)} \bmod \ 9)$ maps $\Gal(\Q_S /N)$  onto the 3-Frattini quotient $\operatorname{Fr}(\Gamma)$.
By definition, $\operatorname{Fr}(\Gamma)$ is the maximal $3$-elementary abelian quotient of $\Gamma$, so 
the homomorphism obtained by reducing coefficents modulo 9 induces an isomorphism
$$
\operatorname{Fr}(\Gamma) \simeq \{ g\in \operatorname{SL}(2,\Z/9\Z): g \equiv I \bmod 3 \}.
$$
We see that $\operatorname{Fr}(\Gamma)$ is a vector space over $\F_3$ endowed with a $\Gal(K/\Q)$-action 
such that $\operatorname{Fr}(\Gamma)\simeq \Ad$.
Moreover, letting $\Q(\rho_2)$ be the fixed field of $\ker \rho_2$, the homomorphism
$\rho_2$ maps $\Gal(\Q(\rho_2)/N)$ onto a $\Gal(K/\Q)$-submodule of $\operatorname{Fr}(\Gamma)$.
Earlier in the proof, when $\rho^{(\ell)}$ was constructed, 
we showed that $\rho_2|_{G_\ell} \in \mathcal{C}_\ell$, which implies that
$\rho_2(\tau_\ell)$ has order $9$ in $\operatorname{SL}(2,\Z/9\Z)$.
The order of $\rho_2(\tau_\ell)$ is equal to the ramification index at $\ell$ in $\Q(\rho_2)/\Q$; since $\ell$ has ramification index $3$ in $N/\Q$, we conclude that $\Gal(\Q(\rho_2)/N)$ must be nontrivial. 
When $\rho_2$ is regarded as a map on 
$\Gal(\Q(\rho_2)/N)$, Schur's Lemma implies that the image must be all of $\operatorname{Fr}(\Gamma)$.
\end{proof}

In \cite{even1}, 
the first example of a surjective 
$p$-adic even representation
\[ 
\begin{tikzcd}
\rho: \Gal(\overline{\Q}/\Q) \arrow[r, twoheadrightarrow] &  \operatorname{SL}(2,\Z_p)   
\end{tikzcd}
\]
was given. In the present notation, the $\rho$ constructed was $\rho = \rho^{(349)}$.
At the time, this example was also the smallest one known, in the sense that 
$\operatorname{SL}(2,\Z_3)$ was realized as the Galois group of a normal extension of $\Q$ ramified only at 3 and at $\ell =349$,
where $349$ was the smallest such prime.
The theorem below relies on the availability of pari-gp, which is capable of performing fast ray class group computations over number fields of large conductors.

\begin{corollary}
Measured by conductor, 
$\rho^{(163)}$ and $\rho^{(277)}$
are the smallest even representations onto $\operatorname{SL}(2,\Z_3)$ of tetrahedral type (even if we allow the conductor to be divisible by more than one prime).  
The prime $\ell=163$ is a Shanks prime with $\ell \equiv 1 \mod 9$, while $\ell=277$ is not Shanks and has
$\ell \not \equiv 1 \mod 9$.
\end{corollary}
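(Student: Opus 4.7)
The plan is to run the algorithm of Theorem~\ref{algo} in pari-gp and sieve through primes $\ell \equiv 1 \bmod 3$ in ascending order, verifying that $163$ and $277$ are the first two satisfying all four conditions. For each candidate $\ell$, I would construct the cubic subfield $L$ of $\Q(\zeta_\ell)$ (either from a Gauss period polynomial or via \texttt{nfsubfields}) and compute $h_L$ with \texttt{bnfinit}, retaining only those $\ell$ with $2\mid h_L$. For the survivors, the $A_4$-extension $K/\Q$ of discriminant $\ell^2$ supplied by condition~(\ref{h_L}) of Theorem~\ref{goodell} is built as the unique unramified quadratic extension of $L$ inside the Hilbert class field of $L$. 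Using \texttt{bnrinit} I would then compute the ray class group $\Gal(K_T^{(3)}/K)$ for $T=\{3,\ell\}$, decompose it as an $\F_3[A_4]$-module by Lemma~\ref{a4reps} using the natural $\Gal(K/\Q)$-action, and keep $\ell$ iff at least one adjoint summand occurs and is ramified at some prime of $K$ above $\ell$. Running this sieve over $\ell<300$ produces exactly $163$ and $277$ as the first two survivors, with the remaining primes failing at step~(\ref{2}) (odd $h_L$), at step~(\ref{ontonew}), or at the ramification test.

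Once $163$ and $277$ are identified as the two smallest members of $\Lambda^{(A_4)}$, the representations $\rho^{(163)}$ and $\rho^{(277)}$ are produced by Theorem~\ref{goodell}, and it remains to verify the Shanks and mod-$9$ assertions by a direct arithmetic check. Solving $a^2+3a+9=163$ gives $a^2+3a-154=0$ with discriminant $625=25^2$, yielding $a=11\geq -1$, so $163$ is Shanks; and $163-1=162=18\cdot 9$ shows $163\equiv 1 \bmod 9$. Solving $a^2+3a+9=277$ gives discriminant $9+4\cdot 268=1081$, which lies strictly between $32^2=1024$ and $33^2=1089$ and is therefore not a perfect square, so $277$ is not Shanks; and $277-7=270=30\cdot 9$ shows $277\equiv 7\not\equiv 1\bmod 9$.

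The main obstacle is the parenthetical claim that $\rho^{(163)}$ remains the smallest even representation onto $\operatorname{SL}(2,\Z_3)$ of tetrahedral type even when the conductor is permitted to be divisible by several primes. Here the single-prime algorithm of Theorem~\ref{algo} does not directly apply; instead, I would extend the sieve to all admissible tame conductors $c<163$ of the form $\ell_1\cdots \ell_k$ with each $\ell_i\equiv 1 \bmod 3$, and for each such $c$ enumerate the totally real $A_4$-fields $K/\Q$ whose tame part of the discriminant divides $c$, either from the LMFDB number-field database or by constructing them from suitable sextic subfields of ray class fields of the real cubic fields of conductor dividing $c$. For each candidate $K$ I would rerun the module-theoretic test of Theorem~\ref{algo}(\ref{ontonew})--(\ref{M}) on $\Gal(K_T^{(3)}/K)$ with $T$ now the full set of tame primes. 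The expected outcome, asserted by the corollary, is that no multi-prime conductor below $163$ yields an adjoint summand ramified at every tame prime, so the single-prime list already furnishes the two smallest examples.
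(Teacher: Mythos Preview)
Your single-prime sieve is essentially what the paper intends, but the paper's own argument is much terser: it simply invokes Theorem~\ref{goodell} for existence and defers the minimality claim (including the parenthetical multi-prime statement) to \cite[Theorem~20]{auto}. So where you attempt a self-contained computation, the paper outsources the hard part to an external reference.

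Two points on your proposal deserve correction. First, a technical slip: you write that $K$ is ``the unique unramified quadratic extension of $L$ inside the Hilbert class field of $L$''. But $\Gal(K/L)\simeq V_4$ has order $4$, not $2$; $K/L$ is the $(\Z/2\Z)^2$-extension cut out by a Klein-four quotient of $\cl(L)$, which is why condition~(\ref{h_L}) asks for $2\mid h_L$ (in fact one needs $4\mid h_L$, or more precisely a $V_4$-quotient, for the full $A_4$-field to exist).

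Second, and more seriously, your treatment of the multi-prime case conflates the existence test with an exhaustion argument. Conditions~(\ref{ontonew})--(\ref{M}) of Theorem~\ref{goodell} are sufficient conditions for a surjective lift to exist; failing them does not by itself preclude a surjective $\rho$. What rules out lifts is the classification side (Theorem~\ref{exhaustion}), and that theorem is proved only in the single-prime setting: the balance computation in Lemma~\ref{balanced}, and hence the rank-zero conclusion, depends on $S=\{3,\ell,\infty\}$ with exactly one tame prime. Rerunning the module-theoretic test with $T$ enlarged does not give you vanishing of the relevant Selmer group for composite conductors, so a negative outcome of your sieve would not certify nonexistence. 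To make your approach rigorous you would either have to extend the balanced/rank-zero analysis to several tame primes, or else appeal (as the paper does) to the external classification in \cite{auto}.
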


\begin{proof}
The existence follows from Theorem \ref{goodell}. 
By \cite[Theorem 20]{auto}, these are smallest even representations onto $\operatorname{SL}(2,\Z_3)$ of tetrahedral type. 
\end{proof}

\begin{corollary}
For primes $\ell \leq 2000,$
the group $\operatorname{SL}(2,\Z_3)$ occurs as the Galois group of distinct, totally real, normal extensions $K^{(\ell)}/\Q$ ramified at $\ell$ and at $3$ for all
$\ell \in \{163, 277, 349, 547, 607, 937, 1399, 1699, 1777, 1879, 1951\}.$
\end{corollary}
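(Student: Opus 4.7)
The plan is to reduce the corollary to a finite computational verification of the hypotheses of Theorem \ref{goodell} at each of the eleven primes in the list. Once those hypotheses are checked for a given $\ell$, Theorem \ref{goodell} supplies a surjective representation $\rho^{(\ell)}: \Gal(\overline{\Q}/\Q) \twoheadrightarrow \operatorname{SL}(2,\Z_3)$ whose kernel cuts out a totally real normal extension $K^{(\ell)}/\Q$ with Galois group $\operatorname{SL}(2,\Z_3)$, tamely ramified at $\ell$ and ramified at $3$. The eleven resulting fields are then pairwise distinct, since their tame ramification loci $\{\ell\}$ are pairwise distinct.

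First, for each $\ell \in \{163, 277, 349, 547, 607, 937, 1399, 1699, 1777, 1879, 1951\}$, I would confirm $\ell \equiv 1 \bmod 3$ and compute, in PARI/GP, the class number $h_L$ of the unique cubic subfield $L$ of $\Q(\zeta_\ell)$, verifying $2 \mid h_L$. This produces the $A_4$-extension $K/\Q$ of discriminant $\ell^2$ guaranteed by \cite[Proposition 13]{auto}, thereby establishing hypothesis (\ref{h_L}) of Theorem \ref{goodell}.

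Second, setting $T=\{3,\ell\}$, I would compute the ray class group $\Gal(K_T^{(3)}/K)$ as an $\F_3[A_4]$-module and use Lemma \ref{a4reps} to decompose it as
\[
\Gal(K_T^{(3)}/K) \;\simeq\; \operatorname{Ad}^0(\overline{\rho}^{(\ell)})^{m}\oplus \F_3^{n},
\]
then check that $m \geq 1$, securing hypothesis (\ref{ontonew}). Finally, to verify (\ref{dependonellnew}), I would compare this decomposition with the analogous computation for the modulus supported only at primes above $3$: every $\operatorname{Ad}^0$-summand must vanish once $\ell$ is removed from $T$, equivalently every such summand must be genuinely ramified at $\ell$. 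Once these three conditions are established for each of the eleven primes, Theorem \ref{goodell} yields the representations $\rho^{(\ell)}$, and taking $K^{(\ell)}$ to be the fixed field of $\ker \rho^{(\ell)}$ completes the proof.

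The main obstacle is purely computational: the ray class field $K_T^{(3)}/K$ lives over a degree-$12$ number field $K$ whose conductor divides a power of $3\ell$, and the decomposition of the resulting group into $\F_3[A_4]$-components becomes the bottleneck for the larger primes in the list (up to $\ell = 1951$). In practice, one exploits the intermediate cubic field $L$ and the known ramification behavior from Lemma \ref{efgatell} to keep the defining polynomials of $K$ and its relevant class fields of manageable size, so that PARI/GP completes each verification and the output feeds directly into Theorem \ref{goodell}.
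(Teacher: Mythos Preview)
Your approach is correct and matches the paper's implicit argument: the paper states this corollary without proof, treating it as an immediate consequence of running the algorithm of Theorem~\ref{algo} (equivalently, verifying the hypotheses of Theorem~\ref{goodell}) for each prime in the list, exactly as you outline. Your added remarks on why the eleven fields are pairwise distinct (different tame ramification loci) and on the computational bottleneck are reasonable elaborations that the paper omits.
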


\[
\begin{tikzcd}[arrows=dash]
& \overline{\Q} & \\
K^{(163)} \arrow{ur} & K^{(277)} \arrow{u} & K^{(349)} \arrow{ul} \\
& & \\
&  \mathbb{Q} \arrow[uul, "\operatorname{SL}_2(\Z_3)"] \arrow[uu, "\operatorname{SL}_2(\Z_3)" description] \arrow[uur, "\operatorname{SL}_2(\Z_3)" swap] & 
\end{tikzcd}
\]

\begin{remark}
    Theorem \ref{goodell} contains a list of splitting conditions on $\ell$ in the extension $K_T^{(3)}$ that depends on the prime $\ell$.  
   Are there infinitely many primes $\ell$ satisfying the conditions of Theorem \ref{goodell}?
   Numerical computations suggest an affirmative answer.
\end{remark}

\subsection{Proof that the series is exhaustive} Let $p$ be a prime and let $\rho: G_\Q \to \operatorname{SL}(2,\Z_p)$ be a representation with residual representation $\overline{\rho}: G_\Q \to \operatorname{SL}(2,\F_p)$.
Let $\ell \neq p$ be a prime and let $\tau_\ell$ be a generator of tame inertia. Note that $\overline{\rho}(\tau_\ell)$ must have finite order, while 
$\rho(\tau_\ell)$ could have finite or infinite order. 
If the order of $\rho(\tau_\ell)$
is higher than the order of 
$\overline{\rho}(\tau_\ell)$, we say that  $\rho$ is  more ramified at $\ell$ than $\overline{\rho}$.

\begin{theorem} \label{exhaustion}
Let $\ell \neq 3$ be a prime. Let 
$$
\rho: \Gal(\overline{\Q}/\Q) \to \operatorname{SL}(2,\Z_3)
$$
by an even representation unramified away from $\ell$ and $3$ such that $\overline{\rho}$
is surjective and unramified at 3.
Let $K$ be the fixed field of $\ker \pi \circ \overline{\rho}$. Assume that if $M$ is an extension of $K$ unramified at all places not dividing $3$ and $\ell$
with 
$\Gal(M/K)\simeq \Ad$, then $M/K$ is ramified at $\ell$.
\begin{enumerate}
    \item If $\rho(\tau_\ell)$ has 1 as an eigenvalue, then $\rho$ must be the surjective representation $\rho^{(\ell)}$. 
    \item If $\rho(\tau_\ell)$ does not have 1 as an eigenvalue, then $\rho(\tau_\ell)$ must have eigenvalues  
    $$ 
    \begin{pmatrix}
        \zeta_3 & \\ & \zeta_3^{-1}
    \end{pmatrix}
 $$ 
 and $\rho$ is no more ramified at $\ell$ than $\overline{\rho}$; in fact, $\rho$ must be the banal lift. 
\end{enumerate}
\end{theorem}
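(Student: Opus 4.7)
The plan is to first pin down $\rho(\tau_\ell)$ up to conjugacy using the tame relation together with $\rho$ landing in $\operatorname{SL}(2,\Z_3)$, and then split into the two cases. The relation $\rho(\sigma_\ell)\rho(\tau_\ell)\rho(\sigma_\ell)^{-1} = \rho(\tau_\ell)^\ell$ implies that the eigenvalues $\lambda, \lambda^{-1} \in \overline{\Z}_3^\times$ of $\rho(\tau_\ell)$ satisfy $\{\lambda, \lambda^{-1}\} = \{\lambda^\ell, \lambda^{-\ell}\}$, so $\lambda$ is a root of unity whose order divides $\ell^2 - 1$. Since $\overline{\rho}(\tau_\ell)$ is unipotent, $\lambda$ reduces to $1$ modulo the maximal ideal, hence $\lambda$ is a $3$-power root of unity; using $\gcd(\ell+1, 3) = 1$, its order divides the $3$-part of $\ell - 1$. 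The critical extra constraint is that $\operatorname{Tr}\rho(\tau_\ell) = \lambda + \lambda^{-1} \in \Z_3$, forcing $\lambda$ to be at most quadratic over $\Q_3$. Since $[\Q_3(\zeta_{3^b}) : \Q_3] = 2 \cdot 3^{b-1}$, this forces $b \leq 1$, so $\lambda \in \{1, \zeta_3, \zeta_3^{-1}\}$: $\rho(\tau_\ell)$ is either unipotent (eigenvalue $1$) or of order exactly $3$ (trace $-1$, eigenvalues $\zeta_3, \zeta_3^{-1}$).

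In case (1), $\rho(\tau_\ell)$ is unipotent. Its upper-right Jordan entry is a unit congruent to $1$ mod $3$, hence a square in $\Z_3$; a diagonal conjugation of determinant $1$ brings $\rho(\tau_\ell)$ to $\begin{pmatrix} 1 & 1 \\ 0 & 1 \end{pmatrix}$. The tame relation then forces $\rho(\sigma_\ell) = \begin{pmatrix} \sqrt{\ell} & y \\ 0 & \sqrt{\ell}^{-1} \end{pmatrix}$ with $y \in \Z_3$ and $\sqrt{\ell} \in 1 + 3\Z_3$, matching Step~2 of the introduction. Hence $\rho|_{G_\ell} \in \mathcal{C}_\ell$; combined with the trivial conditions at $3$ (no restriction) and $\infty$ (evenness is automatic in $\operatorname{SL}_2$), this makes $\rho$ a deformation of $\overline{\rho}$ satisfying all the Selmer conditions of Theorem~\ref{goodell}. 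The corresponding deformation ring is $\Z_3$, so the lift is rigid and $\rho = \rho^{(\ell)}$.

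In case (2), the analysis above already gives $\rho(\tau_\ell)$ of order $3$ with eigenvalues $\zeta_3, \zeta_3^{-1}$, proving the first assertions. To identify $\rho$ with the banal lift $\iota \circ \overline{\rho}$, I induct on $n$ to show $\rho \equiv \iota \circ \overline{\rho} \pmod{3^n}$ up to strict equivalence. Given the $n$-th congruence, the lifts of $\iota \circ \overline{\rho} \bmod 3^n$ to $\operatorname{SL}(2, \Z/3^{n+1}\Z)$ unramified outside $T = \{3, \ell\}$, modulo strict equivalence, form a torsor over $H^1(G_T, \Ad)$. By Lemma~\ref{globalH^1} combined with the hypothesis on $M/K$, this $\F_3$-vector space has dimension $1$ and every nonzero class is ramified at $\ell$. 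Writing a general lift as $(I + 3^n h)(\iota \circ \overline{\rho})$ for some $h \in Z^1(G_T, \Ad)$, the $\tau_\ell$-trace computes as $-1 + 3^n \operatorname{Tr}(h(\tau_\ell) U) \pmod{3^{n+1}}$ where $U = \iota \circ \overline{\rho}(\tau_\ell)$. In a basis with $U = \begin{pmatrix} 1 & 1 \\ 0 & 1 \end{pmatrix}$, a direct matrix computation shows $\operatorname{Tr}(h(\tau_\ell) U)$ equals the lower-left entry of $h(\tau_\ell)$, a quantity that is well-defined on $H^1(G_\ell, \Ad)$ (coboundaries have zero lower-left entry) and that is nonzero precisely when $h|_{G_\ell} \notin H^1_{\operatorname{unr}}(G_\ell, \Ad)$. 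Since $\rho(\tau_\ell)^3 = I$ keeps the $\tau_\ell$-trace at $-1$ mod $3^{n+1}$, but every nonzero ramified $h$ would shift it, we must have $h = 0$. Passing to the projective limit yields $\rho = \iota \circ \overline{\rho}$.

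The main technical obstacle is verifying that the linear functional $h \mapsto \operatorname{Tr}(h(\tau_\ell) U) \bmod 3$ on $H^1(G_T, \Ad) \cong \F_3$ is nonzero --- equivalently, that a ramified-at-$\ell$ nonzero global class necessarily shifts the $\tau_\ell$-trace. This combines the injectivity of the restriction $H^1(G_T, \Ad) \to H^1(G_\ell, \Ad)/H^1_{\operatorname{unr}}(G_\ell, \Ad)$ (which underlies Lemma~\ref{globalH^1}) with the explicit coboundary computation identifying the \emph{ramified part} of $h(\tau_\ell)$ with its lower-left entry.
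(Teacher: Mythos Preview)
Your proof is correct and rests on the same ingredients as the paper's (the tame relation, the rank-zero Selmer group coming from the hypothesis on $M/K$, and the identification of the lower-left entry of $h(\tau_\ell)$ with the ramified direction), but the two cases are organized rather differently. In case~(1) the paper argues by contradiction, choosing the smallest $n$ with $\rho_{n+1}\neq\rho^{(\ell)}_{n+1}$ and using the trace computation $2=\operatorname{Tr}\rho^{(\ell)}_{n+1}(\tau_\ell)=\operatorname{Tr}\rho_{n+1}(\tau_\ell)+3^n c$ to force $c=0$ and hence $h\in H^1_{\mathcal N}=0$; you instead show directly that $\rho|_{G_\ell}\in\mathcal C_\ell$ and invoke $R_{(\mathcal N)_{v\in S}}\simeq\Z_3$, which is cleaner. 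In case~(2) the roles reverse: the paper gives a field-theoretic argument (a nontrivial step $\Q(\rho_{m+1})/\Q(\rho_m)$ would be an $\Ad$-extension of $K$ unramified at $\ell$, contradicting the hypothesis), whereas you run the same trace argument as the paper's case~(1), now with target trace $-1$ against the banal lift. Your uniform cohomological treatment makes the two cases visibly parallel; the paper's field-theoretic version in case~(2) avoids appealing to Lemma~\ref{globalH^1} and the explicit $\dim H^1(G_T,\Ad)=1$ computation. One minor point: in case~(1) your phrase ``upper-right Jordan entry'' tacitly assumes $\rho(\tau_\ell)$ is already upper triangular over $\Z_3$; this requires a short argument (the kernel of $\rho(\tau_\ell)-I$ contains a primitive vector since its reduction mod $3$ does), after which the rest of your normalization goes through.
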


\begin{proof}
Let $\overline{\rho}: \Gal(\overline{\Q}/\Q) \to \operatorname{SL}(2, \F_3)$ be the residual representation obtained by reducing $\rho$ modulo 3. 
Let $\pi: \operatorname{SL}(2,\F_3) \to \operatorname{PSL}(2,\F_3)$ be the canonical projection and let $K$  be  the field fixed by the kernel of $\pi \circ  \overline{\rho}$.  
First, we show that $\ell\in \overline{\Lambda}^{(A_4)}$ and that up to a twist by a character unramified away from $\ell$, we have $\overline{\rho} = \overline{\rho}^{(\ell)}$. Since $\overline{\rho}$ is assumed to be surjective, $\Gal(K/\Q) \simeq \operatorname{PSL}(2,\F_3) \simeq A_4$ (in particular, 
$\overline{\rho}$ and $\rho$ are of tetrahedral type). 
Let $V_4$ be the unique normal subgroup of $A_4$ isomorphic to Klein's four group, and let $L$ be the field fixed by $V_4$. Since $\overline{\rho}$ is assumed to be even and unramified at 3, the field
$L$  is a totally real cubic Galois extension of $\Q$ ramified at $\ell$ and unramified at all other places. 
Consequently, we must have
$\ell \equiv 1 \bmod 3$ and
$L$ is the totally real cubic subfield of $\Q(\zeta_\ell)$. 
Since the ramification at $\ell$ in $K/\Q$ is tame, $e(\ell,K/\Q) = 3$. In particular, $K/L$ is unramified, and the class number of $L$ is divisible by 4.
We conclude that $\ell$ satisfies all the conditions 
defining the set of primes $\overline{\Lambda}^{(A_4)}$.
By definition, 
$\pi \circ \overline{\rho}^{(\ell)}$ is the
projective representation obtained 
by viewing the natural homomorphism 
$$\Gal(\overline{\Q}/\Q) \to \Gal(K/\Q)$$
as a projective Galois representation onto $\operatorname{PSL}(2,\F_3)$; hence $\pi \circ \overline{\rho} = \pi \circ \overline{\rho}^{(\ell)}$.
By assumption, $\overline{\rho}$ is unramified away from $\ell$. Hence, up to a possible twist by a character unramified away from $\ell$, $\overline{\rho} = \overline{\rho}^{(\ell)}$. 

Suppose $\rho(\tau)$ has 1 as an eigenvalue. 
We claim that then $\rho = \rho^{(\ell)}$. 
Suppose, to reach a contradiction, that 
$\rho \neq \rho^{(\ell)}$. Then we may choose an integer $n \geq 1$ such that 
$
\rho_{n+1} \neq \rho_{n}^{(\ell)}
$
while
$\rho_n = \rho_{n}^{(\ell)}$. 
Recall that group $H^1(G_S,\Ad)$ acts transitively on the deformations of $\rho_n^{(\ell)}$ to $\operatorname{SL}(2,\Z/3^{n+1}\Z)$. Hence, for $S = \{3, \ell,\infty \}$, there exists some
$h \in H^1(G_S,\Ad) \setminus \{ 0 \}$ such that 
$$
(I + 3^{n} h) \rho_{n+1} = \rho_{n+1}^{(\ell)}.
$$
Let $a,b,c \in \F_3$ be such that when $h$ is  evaluated at $\tau_\ell$, 
$$
h(\tau_\ell) = \begin{pmatrix}
    a & b \\ c & -a
\end{pmatrix},
$$
and note that 
\begin{IEEEeqnarray*}{rCl}
    (I + 3^{n} h) \rho_{n+1}(\tau_\ell)
&=&\rho_{n+1}(\tau_\ell)+ 3^{n} h(\tau_\ell) \overline{\rho}(\tau_\ell) \\
&=& \rho_{n+1}(\tau_\ell)+ 3^{n}
\begin{pmatrix}
a & b \\ c & -a    
\end{pmatrix}
\begin{pmatrix}
    1 & 1\\
    0 & 1
\end{pmatrix} \\
&=&
\rho_{n+1}(\tau_\ell) 
+3^n 
\begin{pmatrix}
    a & a+b \\ c & c-a 
\end{pmatrix}
\end{IEEEeqnarray*}
Hence
$$
2 = \operatorname{Trace} \rho_{n+1}^{(\ell)}(\tau_\ell) 
= \operatorname{Trace} \rho_{n+1}(\tau_\ell) + 3^n c. 
$$
If $\rho(\tau_\ell)$ has 1 as an eigenvalue, then $\operatorname{Trace}\rho(\tau_\ell) = 2$. This forces $c=0$. Hence 
$h|_{G_\ell} \in \mathcal{N}_\ell$,  
so in fact
$h \in H^1_{\mathcal{N}}(G_S,\Ad) = 0$, a contradiction. We conclude that if $\rho(\tau_\ell)$ has 1 as an eigenvalue, then $\rho = \rho^{(\ell)}$. This is the first case in the statement of the theorem. 

The only other alternative is that $\rho(\tau_\ell)$ does not have 1 as an eigenvalue. After conjugating with an element from 
$\operatorname{GL}(2, \overline{\Q}_3)$, 
$$
\rho(\tau_\ell) = 
\begin{pmatrix}
    t & \\ & t^{-1}
\end{pmatrix}
$$
for some $t\in \overline{\Q}_3^\times$, $t\neq 1$. 
Since $\sigma_\ell$ normalizes $\tau_\ell$,
$
\rho(\sigma_\ell)  
$ is either diagonal or skew symmetric.
In the latter case, the diagonal matrices form a subgroup of index 2 in $\rho(G_\ell)$.
But $\rho(G_\ell)$ is a pro-3 group, a contradiction. We conclude that $\rho(G_\ell)$ is abelian, and 
$$
\rho(\tau_\ell)^{\ell-1} = \begin{pmatrix}
    1 & \\ & 1
\end{pmatrix}.
$$
In particular, $\rho$ is finitely ramified at $\ell$ (whereas $\rho^{(\ell)}$ is infinitely ramified at $\ell$).
Let $k \geq 1$ be the integer such that $\ell \equiv 1 \bmod 3^k$ and 
$\ell \not \equiv 1 \bmod 3^{k+1}$.
Then $\rho(\tau_\ell)^{3^k} = I$.
This means that 
$$
\rho(\tau_\ell)^{3^k} = \begin{pmatrix}
    t^{3^k} & \\ & 1/t^{3^k}
\end{pmatrix}
= \begin{pmatrix}
    1 & \\ & 1 
\end{pmatrix}.
$$
That is, $t\in \overline{\Q}_3$ and $t^{3^k} = 1$, i.e. the minimal polynomial of $t$ over $\Q_3$ divides $X^{3^k}-1$. 
Thus, $t\in \Q_3(\zeta_{3^k})$ and $\Q_3(t) \subseteq \Q_3(\zeta_{3^k})$ where $\zeta_{3^k}$ is a primitive $3^k$th root of unity.
On the other hand, since $\rho$ is 2-dimensional and valued in $\operatorname{SL}(2,\Z_3),$
the eigenvalue $t$ is root of the quadratic polynomial
$$
p(\lambda) = \det (\rho(\tau_\ell) - \lambda I) \in \Z_3[\lambda]
$$
with coefficients in $\Z_3$. 
In total, $t$ lives in a quadratic extension of $\Q_3$ 
contained in $\Q_3(\zeta_{3^k})$.
Here, $\Gal(\Q_3(\zeta_{3^k})/\Q_3) \simeq (\Z/3^k\Z)^\times
\simeq \Z/3^{k-1}\Z \times \Z/2\Z$ admits $\Z/2\Z$ as a quotient in a unique way. In other words, $\Q_3(\zeta_{3^k})$ contains a unique subfield of degree 2 over $\Q_3$, namely $\Q_3(\zeta_3)$. 
We conclude that $t \in \Q_3(\zeta_3)$ and
\[
\rho(\tau_\ell)^3 = \begin{pmatrix}
    1 & \\ & 1
\end{pmatrix}.
\]
But then the ramification of $\rho$ at $\ell$ is exhausted already in $\overline{\rho}$. 
Recall that $\rho_n := \rho \bmod n$ for integers $n\geq 1$. 
Let $\Q(\rho_n)$ be the field fixed by $\ker \rho_n$ and 
recall that
$\Gal(\Q(\rho_{n+1})/\Q(\rho_n))$
is an $\F_3[\Gal(K/\Q)]$-submodule of 
$\Ad$ for all $n \geq 1$.
Suppose there exists some $n \geq 1$ for which the extension 
$\Q(\rho_{n+1})/\Q(\rho_{n})$ is not trivial,
and let $m \geq 1$ be the smallest such integer. 
Since $\Ad$ is irreducible, 
$$\Gal(\Q(\rho_{m+1})/\Q(\rho_{m})) \simeq \Ad$$ as $\F_3[\Gal(K/\Q)]$-modules.
Since $\rho(\tau_\ell)^3 = 1$, $\Q(\rho_{m+1})/\Q(\rho_m)$ must be unramified at all places not dividing 3.  
By the choice of $m \geq 1$, 
it must be the case that 
$\ker \rho_m = \ker \overline{\rho}$, i.e. 
$$\Q(\rho_m) = \Q(\overline{\rho}).$$
But then there is an extension $M/K$ 
unramified away from 3 with $\Gal(M/K)\simeq \Ad$, a contradiction.
We conclude that if $\rho(\tau_\ell)$ does not have 1 as an eigenvalue, then 
the field extension 
$\Q(\rho) / \Q(\overline{\rho})$
is trivial,
$
\ker \rho = \ker \overline{\rho},
$
and the image of $\rho$ is finite of the same order, 24, as $\overline{\rho}$. 
\end{proof}

\bibliographystyle{amsplain} 
\bibliography{references.bib}

%\nocite{*}

\end{document}